\PassOptionsToPackage{margin=1in}{geometry}
\documentclass[11pt, a4paper]{article}
\usepackage[T1]{fontenc}
\usepackage[utf8]{inputenc}
\usepackage{microtype}
\usepackage{fullpage}
\usepackage[parfill]{parskip}
\usepackage{mathtools,amssymb,amsthm}
\usepackage[dvipsnames]{xcolor}
\usepackage{tikz}
\usepackage{pgfmath}

\newtheoremstyle{compact}
  {3pt}      
  {3pt}      
  {\itshape} 
  {}
  {\bfseries}
  {.}
  {.5em}
  {}

\theoremstyle{compact}

\newtheorem{theorem}{Theorem}[section]
\newtheorem{lemma}[theorem]{Lemma}
\newtheorem{definition}[theorem]{Definition}

\newtheorem{proposition}[theorem]{Proposition}

\newcounter{casenum}

\usepackage[a4paper]{geometry}
\usepackage{fancyhdr}
\usepackage{lastpage}
\usepackage{graphicx, wrapfig, subcaption, setspace, booktabs}
\usepackage[T1]{fontenc}
\usepackage[font=small, labelfont=bf]{caption}
\usepackage{fourier}
\usepackage[english]{babel}
\usepackage{sectsty}
\usepackage{url, lipsum}
\usepackage{hyperref}
\usepackage{xcolor}
\usepackage{listings}
\usepackage{color}
\usepackage[nottoc]{tocbibind} 

\definecolor{codegreen}{rgb}{0,0.6,0}
\definecolor{codegray}{rgb}{0.5,0.5,0.5}
\definecolor{codepurple}{rgb}{0.58,0,0.82}
\definecolor{backcolour}{rgb}{0.95,0.95,0.92}

\lstdefinestyle{mystyle}{
    backgroundcolor=\color{backcolour},   
    commentstyle=\color{codegreen},
    keywordstyle=\color{magenta},
    numberstyle=\tiny\color{codegray},
    stringstyle=\color{codepurple},
    basicstyle=\footnotesize,
    breakatwhitespace=false,         
    breaklines=true,                 
    captionpos=b,                    
    keepspaces=true,                 
    numbers=left,                    
    numbersep=5pt,                  
    showspaces=false,                
    showstringspaces=false,
    showtabs=false,                  
    tabsize=2
}
 
\usepackage{authblk}
\title{Caged subsequences in permutations}
\author[1]{Niranjan Balachandran\thanks{\tt{niranj@math.iitb.ac.in}}}
\author[2]{Omkar Ramdas\thanks{\tt{23d0787@iitb.ac.in, omkar.ramdas.02@gmail.com}}}
\author[3]{Umesh Shankar\thanks{\tt{204093001@iitb.ac.in, umeshshankar@outlook.com}}} 
\affil[1,2]{Department of Mathematics, Indian Institute of Technology Bombay, Powai, Mumbai 400076, India}
\affil[3]{Department of Computer Science and Automation, Indian Institute of Science Bengaluru, Bengaluru 560012, India} 

\date{August 10, 2026}
\begin{document}
\maketitle
\begin{abstract}
Given a sequence $\mathfrak{a}:=(a_1,\ldots,a_n)$ of reals, a subsequence $\mathfrak{b}=(a_{i_1},\ldots,a_{i_k})$ is said to be \emph{caged} if the largest and smallest among the members of $\mathfrak{b}$ are $a_{i_1}$ and $a_{i_k}$, though not necessarily in that order. In this paper, we consider the problem of maximal caged sequences in  permutations $\pi\in S_n$. We also consider the same problem for a random permutation, both when the permutation is chosen uniformly at random and also when it is picked uniformly at random from among the permutations of rectangular shape, via the RSK correspondence.  
\end{abstract}

\section{Introduction}
Throughout the paper, $n$ will denote a positive integer and $[n]:=\{1,\ldots,n\}$ will denote the set of integers from $1$ to $n$. As usual, $S_n$ will denote the set of all permutations over $[n]$ and we will write a permutation $\pi$ as a sequence $(\pi(1), \pi(2), \dots, \pi(n))$, or simply as $(\pi_1,\ldots,\pi_n)$.\\

The study of occurrences of distinctive patterns in permutations is a rich area of Extremal Combinatorics, and also possibly counts among the oldest of extremal problems in Combinatorics. One such avenue of extremal problems concerns the occurrence of a \textit{pattern} in a permutation. Instead of trying to define this formally, we simply illustrate this notion through an example: a permutation $\pi=(\pi_1,\cdots,\pi_n)$ is said to have $123$ as a pattern if there exist $1\le i<j<k\le n$ such that $\pi_i<\pi_j<\pi_k$. Another way of stating the same would be: $\pi$ admits a monotone increasing subsequence of length $3$.\\

One of the earliest results about the unavoidability of certain patterns in permutations is the well-known Erd\H{o}s-Szekeres theorem \cite{ES}: Any sequence of $n^2+1$ real numbers contains a monotone subsequence of length $n+1$. Another way of stating the same as an extremal problem is the following: If $N(n)$ denotes the smallest integer such that any sequence of $N\ge N(n)$ reals contains a monotone subsequence of length $n$, then $N(n)=(n-1)^2+1$. The latter formulation is more precise since it also tells us that there are examples of sequences of length $(n-1)^2$ that do not admit a monotone subsequence of length $n$.\\

Another pattern whose occurrence in permutations is rather well-studied is an \textit{alternating subsequence}, viz., a subsequence $\pi_{i_1}\cdots \pi_{i_k}$ of $\pi$  satisfying $\pi_{i_1}>\pi_{i_2}<\pi_{i_3}>\pi_{i_4}<\cdots$. Again, one is interested in the length of a longest alternating sequence $as(\pi)$ in a given permutation $\pi$, and the number $as_k(\pi)$ of alternating sequences in $\pi$ of a fixed length of $k$. For more details, see \cite{Stan}).   \\

In this paper, we seek a different kind of pattern. Given a permutation $\pi\in S_n$, and a sequence $1\le i_1<i_2<\cdots<i_k\le n$, the subsequence $S = (\pi(i_1), \pi(i_2), \dots, \pi(i_k) )$ of $\, \pi\,$ is called \emph{caged} if $\, \{ \min(S), \max(S)\} = \{ \pi(i_1), \pi(i_k) \}$. For example, if $\pi=(8,9,5,6,14,13,2,16,15,1,4,3,11,12,7,10)$, the subsequences $S_1=(8,5,6,2,1)$, $S_2=(8,9,14,13,16)$, $S_3=(1,4,3,11)$ are all caged subsequences. We shall call the caged sequence a \emph{Forward Caged Sequence (FCS)} if $\min(S)=\pi(i_1)$ and $\max(S)=\pi(i_k)$ and a \emph{Backward Caged Sequence (BCS)} if $\max(S)=\pi(i_1)$ and $\min(S)=\pi(i_k)$. In the example above, $S_1$ is backward caged, whereas $S_2$ and $S_3$ are forward caged. The length of any longest caged subsequence is called the caged length of $\pi$ and shall be denoted by $c(\pi)$.\\

The natural extremal question that arises again is: Given $\pi\in S_n$, how large a caged sequence is it guaranteed to contain? Clearly, since any monotone subsequence of $\pi$ is also caged, the Erd\H{o}s-Szekeres theorem implies $c(\pi)$ is at least of the order $\sqrt{n}$. However, as can be seen in the example, none of $S_1,S_2,S_3$ are monotone sequences, so presumably, $c(\pi)$ could be much larger than the length of a largest monotone subsequence of $\pi$. In fact, it is not difficult to show that for any $\pi\in S_n$, $c(\pi)\ge\Omega(n)$. 

Before we outline that argument, here are a few simple observations. If $\pi^R$ denotes the \emph{reverse} permutation, i.e., $\pi^R(i)=\pi(n+1-i)$, then $c(\pi)=c(\pi^R)$.  Also, if $\overline{\pi}$ denotes the \emph{complement} permutation of $\pi$, i.e., $\overline{\pi}(i)=n+1-\pi(i)$, then we also have $c(\pi)=c(\overline{\pi})$. Furthermore, for $1\le i<j\le n$, $c_{\pi}[i,j]$ shall denote the length of a maximal caged sequence with the ends at positions $i,j$. We will often drop the subscript to keep the notation simpler.\\

Let $j_0:=\pi^{-1}(n)$, and without loss of generality, we may assume $j_0\ge n/2$, for otherwise we consider $\pi^R$. 
Let $BAD:=\{k\in(1,j_0): \pi(k)<\pi(1)\}$. Clearly, if $|BAD|\le j_0/2$ then $c[1,j_0]\ge j_0/2\ge n/4$, so without loss of generality assume that $n/4\le t=|BAD|>j_0/2$. Let the members of $BAD$ be $\ r_1,\ldots, r_t$ with $1<r_1<\cdots<r_t<j_0$. Let the minimum among $\{\pi(r_1),\ldots,\pi(r_t)\}$ occur at $r_h$ for some $h$. If $h<t/2$ then $c[r_h,j_0]\ge t/2$ and if $h\ge t/2$ then $c[1,r_h]\ge t/2$. In either case, we get an interval $[i,j]$ such that $c[i,j]\ge \frac{t}{2}\ge n/8$.\\

To find a permutation $\pi$ with $c(\pi)$ as small as possible, the natural heuristic is that if $i,j$ are far apart, then $\pi(i), \pi(j)$ must be close and vice-versa. This suggests the following `braiding' permutation as a good optimal candidate: 
Let $n = 4k + 1$. Set $\pi(2k+1)=1$. Next, place $4k + 1, 2$ to the right of $1$ and then $3, 4k$ to the left \emph{in that order}, and then again, switch to the right end with the next pair $4k-1, 4$ of elements. Repeat this braiding pattern till all the elements have been placed in some position, so that gives us the permutation:
 \[
 \begin{pmatrix}
    1 & 2 & 3 & 4 & \cdots & 2k -1 & 2k  & 2k + 1 & 2k + 2 & 2k + 3 & 2k + 4 & 2k + 5 & \cdots & 4k & 4k + 1 \\
   2k + 1 & 2k + 2 & 2k -1 & 2k + 4 & \cdots &  3 & 4k & 1 & 4k + 1 & 2 & 4k -1 & 4 & \cdots & 2k + 3 & 2k\\
  \end{pmatrix}
    \]

It is easy to see that for this $\pi$ we have $c(\pi)\sim n/4$, which makes the previous lower bound short by a factor of two.\\

Our first result shows that the heuristic is not correct at either end:
\begin{theorem}\label{caged}
    Consider $n = 5k $ for some positive integer $k$. For any $\pi \in S_n$, $\displaystyle c(\pi) \geq k + 2$. In general, $$\displaystyle \min_{\pi \in S_n} c(\pi) = \left\lfloor\frac{n}{5}\right\rfloor + 2 . $$
\end{theorem}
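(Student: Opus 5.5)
The plan has two halves: a lower bound $c(\pi)\ge \lfloor n/5\rfloor+2$ valid for every $\pi\in S_n$, and a matching construction. Note that for $n=5k$ the claimed minimum is $k+2$, so the first sentence of Theorem~\ref{caged} is the $n=5k$ case of the general formula. Since $c(\pi)=c(\pi^R)=c(\overline{\pi})$, I would use these symmetries freely to normalise positions of extreme values.

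\textbf{Lower bound.} I would refine the argument given before the theorem. The natural cages are those whose ends are at the positions of the values $1$ and $n$. Let $a=\pi^{-1}(1)$ and $b=\pi^{-1}(n)$, and by reversal assume $a<b$.
\begin{itemize}
\item The interval $[a,b]$ is itself caged, so $c[a,b]=b-a+1$.
\item Otherwise the mass lies in the prefix $[1,a)$ and the suffix $(b,n]$. The element $\pi(1)$ cages, together with position $b$, every entry in $(1,b)$ with value above $\pi(1)$. Symmetrically, $\pi(n)$ together with $a$ cages every entry in $(a,n)$ below $\pi(n)$.
\item Further cages come from $[1,b]$ with the left end chosen as the minimum of the prefix, and from $[a,n]$ with the right end chosen as the maximum of the suffix.
\end{itemize}
Concretely, I would let $x$ be the number of entries between $a$ and $b$, and split the prefix (resp.\ suffix) entries into those whose values lie above or below a threshold. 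The key is a counting identity: the sum of the sizes of five well-chosen cages counts every position at least once, plus the two endpoints of each. That gives $5(c-2)\ge n-5$ roughly, hence $c\ge \lfloor n/5\rfloor+2$ after checking integrality.

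A good way to find those five cages is to mimic the greedy ``BAD set'' argument recursively. The first level uses the cage $[1,b]$ versus the cage ending at the minimum of BAD. The next level iterates once more inside BAD, which is why one gets $1/5$ rather than $1/8$. The precise choice of the five cages is the step I expect to be the main obstacle. I would find it by reverse-engineering it from the extremal construction below: the cages that are tight there are the ones to use in the general averaging.

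\textbf{Construction.} For $n=5k+r$ with $0\le r\le 4$, I would build a five-block braided permutation. Split positions into five consecutive blocks of size about $k$, and assign value ranges so that the following hold.
\begin{itemize}
\item Within each block the entries alternate, braiding as in the $n=4k+1$ example, so no caged subsequence inside a block exceeds roughly half the block plus $2$.
\item Blocks far apart in position receive interleaved values near each other, so any cage spanning several blocks is blocked: its end values are close, so almost all intermediate entries fall outside the cage range.
\end{itemize}
I would then verify $c(\pi)\le k+2$ by case analysis on which blocks contain the two ends $i<j$ of a cage. For each pair of blocks, I would bound the number of entries in positions $(i,j)$ whose values lie strictly between $\pi(i)$ and $\pi(j)$ by $k$. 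The leftover $r$ elements are placed at the extremes so that they do not increase $c$; I would check small $n$ by computer to pin down the exact layout.

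The hardest part is matching the constant exactly (the $+2$) in both directions. I would first get the $n/5+O(1)$ asymptotics right and only then tune the endpoints.
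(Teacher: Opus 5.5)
Both halves of your plan have genuine gaps.

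\textbf{Lower bound.} The normalisation $a=\pi^{-1}(1)<b=\pi^{-1}(n)$ and the three cages $[a,b]$, $[1,b]$, $[a,n]$ are exactly right. However, the two remaining cages, which you yourself flag as the main obstacle, are never found. The ones you suggest (left end at the prefix minimum, right end at the suffix maximum) cannot work. Suppose $b=a+1$, $\pi(1)$ exceeds every entry of $(1,a)$ and $(1,a)$ is decreasing, while $\pi(n)$ is below every entry of $(b,n)$ and $(b,n)$ is decreasing. Then all five of your cages have length $2$, although $c[1,a]=a$ and $c[b,n]=n-b+1$.

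The missing idea is to use the other pairs among the four special positions $1,a,b,n$. The cage $[1,a]$ consists of positions $1$ and $a$ together with precisely the entries of $(1,a)$ below $\pi(1)$, which $[1,b]$ misses. Likewise $[b,n]$ consists of positions $b$ and $n$ together with precisely the entries of $(b,n)$ above $\pi(n)$, which $[a,n]$ misses. Hence $c[1,a]+c[1,b]\ge a+2$, $c[a,n]+c[b,n]\ge n-b+3$ and $c[a,b]=b-a+1$. So the five lengths sum to at least $n+6$ (positions $1,n$ are counted twice and $a,b$ three times). This gives $c(\pi)\ge\lceil (n+6)/5\rceil=\lfloor n/5\rfloor+2$ for all $n\ge 2$, with no recursion on BAD. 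This is the count in the paper's Case 2; with your normalisation it covers every $\pi$ at once. Note also that your target $5(c-2)\ge n-5$ is one unit short: for $n=5k$ it gives only $c\ge k+1$, and no integrality check recovers that unit.

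\textbf{Upper bound.} This half is not carried out either. No permutation is specified, so reverse-engineering the cages from it is circular. Braiding inside blocks plays no role in the extremal example. Placing the $r$ leftover entries `at the extremes' is unjustified, since a new extreme value is a new cage endpoint that can see many entries.

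What works is five \emph{monotone} runs. For $n=5k+4$, let $\pi_{ext}$ list $3k+3,\dots,4k+3$, then $k+1,\dots,1$, then $2k+3,\dots,3k+2$, then $5k+4,\dots,4k+4$, then $k+2,\dots,2k+2$. Here $a=2k+2$ and $b=3k+3$. Each of the five cages above has length exactly $k+2$, so every inequality in the count is tight, and a check of the fifteen pairs of blocks shows no cage is longer. For $5k\le n<5k+4$, simply delete entries of $\pi_{ext}$. Passing to a subsequence cannot increase $c$, while the lower bound $\lceil (n+6)/5\rceil=k+2$ still holds.
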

 The Erd\H{o}s-Szekeres theorem also has an asymmetric version - any sequence of $k\ge N(m,n):=mn+1$ reals admits either a monotone \emph{increasing} subsequence of length $m+1$ or a monotone \emph{decreasing} subsequence of length $n+1$, and again, the smallest $k$ for which this conclusion holds is $N(m,n)$. Naturally, one might ask the same for caged sequences: Given positive integers, $k,\ell$, determine the smallest integer $\mathcal{C}(k,\ell)$ such that any permutation $\pi\in S_n$ with $n\ge\mathcal{C}(k,\ell)$ admits a \emph{forward} caged sequence of length $k$ or a \emph{backward} caged sequence of length $\ell$. Of course, it is clear that such a $\mathcal{C}(k,\ell)$ exists since, if a sequence contains a caged sequence of length $\max(k,\ell)$ then surely one of the two aforementioned conclusions must hold. Indeed, an analogous result holds:
\begin{theorem}\label{asymmetric}
    For any positive integers $k,\, l \geq 3$,
    \[
    \mathcal{C}(k,l) = \mathcal{C}(l,k)  = \min \{ 3k + 2l, 2k + 3l\} - 10.
    \]
\end{theorem}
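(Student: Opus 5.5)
By symmetry it suffices to prove $\mathcal{C}(k,l)=\min\{3k+2l,2k+3l\}-10$; the first equality then follows from reversal. Reversing $\pi$ turns a forward caged sequence (minimum first, maximum last) into a backward one and vice versa, and it preserves lengths. So $\pi$ avoids FCS of length $k$ and BCS of length $l$ iff $\pi^R$ avoids FCS of length $l$ and BCS of length $k$, which gives $\mathcal{C}(k,l)=\mathcal{C}(l,k)$. Complementation $\overline{\pi}$ also swaps the two types. I would therefore assume $k\le l$, so that the target is $3k+2l-10$, and prove matching lower and upper bounds. As a sanity check, $k=l$ gives $5k-10$, which agrees with Theorem~\ref{caged}: $c(\pi)\ge m$ as soon as $n\ge 5(m-2)$.

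\textbf{Lower bound (construction).} I would build a permutation of length $3k+2l-11$ with no FCS of length $k$ and no BCS of length $l$. The model is the braiding permutation behind Theorem~\ref{caged}, made asymmetric by taking blocks of different sizes. Roughly, use three blocks of size about $k-2$ and two blocks of size about $l-2$, and interleave values in the braided way. The order of positions should force any pair $\pi(i)<\pi(j)$ with $i<j$ and all intermediate values between them to span at most $k-1$ positions. Likewise, any pair with $\pi(i)>\pi(j)$ should span at most $l-1$ positions. I would fix the exact pattern by first working out the small case $k=l=3$, where $n=4$ and no caged sequence of length $3$ may exist, then $k=3,l=4$ and $k=4,l=3$. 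Then I would generalize by extending the blocks, and verify the claim case by case on which blocks contain the two endpoints.

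\textbf{Upper bound.} Let $N=3k+2l-10$, suppose $\pi\in S_N$ has no FCS of length $k$ and no BCS of length $l$, and aim for a contradiction. The key tool is the chain of alternating extremes. Let $p=\pi^{-1}(1)$ and $q=\pi^{-1}(N)$, and say $p<q$ (otherwise apply complementation, which swaps the roles of $k$ and $l$ and forces a separate but analogous count). Then $[p,q]$ is an FCS, so $q-p\le k-2$. In the prefix $[1,p]$, the maximum sits at some $r$, and $[r,p]$ is a BCS, so $p-r\le l-2$. In the suffix $[q,N]$, the minimum sits at some $s$, and $[q,s]$ is a BCS, so $s-q\le l-2$. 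Iterating outward, alternately taking the extreme of the remaining prefix or suffix, covers $[1,N]$ by consecutive windows whose lengths are bounded by $k-2$ or $l-2$. I would then combine these window bounds, together with the argument from the introduction that bounds the number of ``bad'' positions inside a window, to show that the windows cannot carry too many elements. The goal is a total of at most $3(k-2)+2(l-2)-1$ positions, contradicting $N=3k+2l-10$.

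\textbf{Main obstacle.} The hard step is the counting in the upper bound. Naive window bounds only control the gaps between consecutive record extremes, not the number of such windows, so they give a bound that grows with the number of alternations. I would try to show that after about five alternations one can splice two windows into a single caged sequence that is too long. A later extreme of the prefix together with an extreme of the suffix should cage everything between them, because the nested extremes of the windows are monotone. This explains the coefficients $3$ and $2$: three $k$-type windows and two $l$-type windows before a splice is forced. Making this splicing rigorous, including the off-by-one accounting of the $-10$, is where I expect most of the difficulty.
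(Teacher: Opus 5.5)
You have a genuine gap: the upper bound, which is the substance of the theorem, is not proved. The splicing step you rely on also cannot work as stated. Your symmetry reduction and your window bounds ($q-p\le k-2$, $p-r\le l-2$, $s-q\le l-2$) are correct. The trouble is that if $x<p<q<y$, the values $1$ and $N$ sit strictly between positions $x$ and $y$. So no pair of ``outer'' extremes ever cages everything between them. Beyond the innermost window you only get \emph{partial} caged sets, and chains of fully caged windows give no control on how many windows there are.

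The missing idea is a covering argument with a \emph{fixed} family of five caged sequences anchored at the four positions $1,p,q,N$. For $1<p<q<N$, take the forward sequences with ends $[p,q]$, $[1,q]$, $[p,N]$ and the backward ones with ends $[1,p]$, $[q,N]$. Put $r=\#\{i\le q:\pi(i)\le\pi(1)\}$ and $s=\#\{i\ge p:\pi(i)\le\pi(N)\}$. Then
\[
c[p,q]=q-p+1,\quad c[1,q]=q-r+1,\quad c[1,p]\ge r-(q-p-1),\quad c[p,N]=s,\quad c[q,N]\ge N-q+1-(s-2).
\]
The unknowns cancel and the five lengths sum to at least $N+6$: every position is caged by at least one of them, and $1,p,q,N$ are caged several times. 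Three forward lengths $\le k-1$ and two backward lengths $\le l-1$ then force $N\le 3k+2l-11$, a contradiction. This fixed family is where the coefficients $3$ and $2$ come from, not a count of alternations.

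The remaining configurations also need more than an ``analogous count.'' If $N$ precedes $1$ ($q<p$), complementation swaps the roles of $k$ and $l$. The same five-sequence sum then only gives the threshold $2k+3l-10$, which exceeds $N=3k+2l-10$ when $l>k$. The paper treats $1<q<p<N$ separately, in two subcases:
\begin{itemize}
\item If $\pi(1)>\pi(N)$, it uses a refined count with four of the sequences. It splits the entries below $\pi(1)$ and below $\pi(N)$ according to whether they lie inside $[q,p]$, and uses that $\pi(1)>\pi(N)$ gives at least as many entries of $[q,p]$ below $\pi(1)$ as below $\pi(N)$. This yields $k\le 2$.
\item If $\pi(1)<\pi(N)$, it passes to $\pi^{-1}$. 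Inversion preserves forward and backward types, since a caged sequence is the set of graph points in the axis-parallel rectangle spanned by its two end points. It also puts $1$ before $N$, both in interior positions, which is the first case.
\end{itemize}
The cases where $1$ or $N$ sits at position $1$ or $N$ need short separate arguments.

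Finally, your lower bound is only a plan, and the extremal example has to be pinned down exactly. The paper uses increasing blocks of lengths $k-2,k-3,k-2$ interleaved with two decreasing blocks of length $l-2$:
\[
[2k+l-6,\,3k+l-9]\,[l-2,\,1]\,[k+l-3,\,2k+l-7]\,[3k+2l-11,\,3k+l-8]\,[l-1,\,k+l-4],
\]
with the middle block empty when $k=3$. Its total length is $3k+2l-11$.
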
 
Another highlight in the study of monotone subsequences in permutations is the strong connection with the RSK correspondence (see \cite{Sag} for a clear exposition). Every permutation $\pi$ is in bijective correspondence with a pair $(P(\pi),Q(\pi))$ of Standard Young Tableau (SYT) of the same shape, and the length of a longest monotone increasing subsequence of $\pi$ is equal to the length of the first row in $P(\pi)$ while the length of a longest decreasing subsequence in $\pi$ is equal to the length of the first column of $P(\pi)$. This leads to a natural question of considering a uniformly random permutation $\pi\in S_n$ and determining the asymptotics of the length $\ell(\pi)$ of a longest monotone subsequence in $\pi$. The celebrated work of Logan–Shepp \cite{LogShep} and Vershik–Kerov \cite{VerKer} determines the expected value of a longest monotone sequence in a very precise sense (also see \cite{Sur_math} for a simpler exposition). The paper of Stanley \cite{Stan} similarly addresses the question of a longest alternating sequence in a random permutation.\\

This leads us to similarly consider the same question for the parameter $c(\pi)$ for a random $\pi$. In fact, two natural questions are in order: 
\begin{enumerate}
    \item What is the behavior of $c(\pi)$ for a uniformly random $\pi\in S_n$?
    \item What is the behavior of $c(\pi)$ for a uniformly random $\pi\in S_{n^2}$ of \emph{square shape}? 
\end{enumerate}
The following theorem answers the first question.  
\begin{theorem}\label{expectation}
For any $\omega(n) \rightarrow \infty$ as $n \rightarrow \infty$,
\[
  \lim_{n \rightarrow \infty} \mathbb{P}\left( c(\pi) \geq \left(1-\frac{\omega(n)}{\sqrt{n}}\right)n \right) = 1.
\]
Also, 
\[
  \lim_{n \rightarrow \infty} \mathbb{P}\left( c(\pi) \leq \left(1-\frac{1}{\omega(n)\sqrt{n}}\right)n \right) = 1.
\]
Consequently, 
\[
\mathbb{E}[c(\pi)] = (1 - o(1))n .
\]
\end{theorem}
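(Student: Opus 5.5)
The plan is to reduce everything to the behaviour of a random permutation near its four ``corners'': the first few positions, the last few positions, and the smallest and largest values. The key identity is this. For positions $i<j$ with $\pi(i)<\pi(j)$, the longest forward caged sequence with ends $i,j$ is obtained by taking every $k\in(i,j)$ with $\pi(i)<\pi(k)<\pi(j)$. Hence
\[
c[i,j] \;=\; n-\Big((i-1)+(n-j)+\#\{k\in(i,j):\pi(k)<\pi(i)\}+\#\{k\in(i,j):\pi(k)>\pi(j)\}\Big),
\]
and the same formula with the roles of $\pi(i),\pi(j)$ swapped holds in the backward case. So $n-c(\pi)$ is the minimum over pairs of this ``loss''. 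The lower bound will exhibit a pair with small loss, and the upper bound will show that every pair has loss at least $\sqrt n/\omega(n)$.

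\textbf{Lower bound.} Put $m=\lfloor\sqrt n\rfloor$. Let $i$ be the position of the minimum of $\pi$ on $[1,m]$ and $j$ the position of the maximum on $[n-m+1,n]$. With high probability $\pi(i)<\pi(j)$, and then the loss of $[i,j]$ is at most $2m+(\pi(i)-1)+(n-\pi(j))$. Now $\mathbb P(\pi(i)>t)$ is the probability that none of the values $1,\dots,t$ lands in the first $m$ positions, which is at most $(1-m/n)^t\le e^{-tm/n}$. Taking $t=\omega(n)\sqrt n/4$ makes this $o(1)$, and the symmetric statement holds for $n+1-\pi(j)$. Since $2m\le \omega(n)\sqrt n/2$ for large $n$, the loss is at most $\omega(n)\sqrt n$ with probability $1-o(1)$. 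This step is routine.

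\textbf{Upper bound.} Let $L=\sqrt n/\omega(n)$ and suppose some pair $i<j$ has loss less than $L$; by complementation it suffices to treat the forward case. Then $i\le L$ and $j\ge n-L$. Moreover, every value $v<\pi(i)$ sits either outside $(i,j)$, which is fewer than $L$ positions, or inside $(i,j)$, where it is counted in the loss. Hence $\pi(i)\le 2L+1$. Consequently the first $\lceil L\rceil$ positions contain a value at most $2L+1$. By a union bound this has probability at most $L(2L+1)/n=O(1/\omega(n)^2)\to0$. The backward case gives the same event for $\overline{\pi}$, so with probability $1-o(1)$ every pair has loss at least $L$, i.e.\ $c(\pi)\le n-\sqrt n/\omega(n)$.

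\textbf{Expectation and main obstacle.} Applying the first statement with, say, $\omega(n)=\log n$ together with the trivial bound $c(\pi)\le n$ gives $\mathbb E[c(\pi)]\ge(1-o(1))(1-\log n/\sqrt n)\,n=(1-o(1))n$. The main care point is the upper bound's bookkeeping. There we must make sure that ``loss $<L$'' really forces both endpoints into the corner (small position \emph{and} small value) simultaneously, since that joint constraint is exactly what gives the $L^2/n$ probability. Everything else is a direct union bound.
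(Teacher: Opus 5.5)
Your proof is correct, and on both halves it takes a genuinely different and more elementary route than the paper.

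\textbf{The paper's route.} It works in the i.i.d.\ model $X_1,\dots,X_n\sim U[0,1]$.
\begin{itemize}
\item For the lower bound it asks for an index among the first $\delta n$ with value $\le\delta$ and one among the last $\delta n$ with value $\ge 1-\delta$. It then applies a Chernoff bound to the number of points in the central box $(\delta n,(1-\delta)n)\times(\delta,1-\delta)$, where $(1-2\delta)^3=1-\omega(n)/\sqrt n$.
\item For the upper bound it proves Lemma~\ref{lemma 4.2}, a tail bound for a binomial whose success probability is $D=|X_i-X_j|$. It union-bounds this over the $O(d^2)$ pairs $(i,j)$ with $j-i\ge n-d-1$.
\end{itemize}

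\textbf{Your route.} The exact loss identity $n-c[i,j]=(i-1)+(n-j)+\#\text{below}+\#\text{above}$ replaces all of this.
\begin{itemize}
\item In the lower bound, the uncaged interior points are bounded deterministically by $(\pi(i)-1)+(n-\pi(j))$. So you only need the elementary tail $\mathbb{P}(\min_{k\le m}\pi(k)>t)\le e^{-tm/n}$, and no concentration inequality.
\item In the upper bound, loss $<L$ pins the left endpoint into a corner: its position is at most $\lceil L\rceil$, and its value is small (or large, after complementing). One union bound over $O(L^2)$ position--value pairs then gives $O(L^2/n)=O(\omega(n)^{-2})$. There is no binomial mixture and no union over pairs $(i,j)$.
\end{itemize}

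\textbf{What each buys.} Both arguments reach the same $\sqrt n$ threshold. The paper's gets the faster rate $O(\omega(n)^{-4})$ because it effectively constrains both endpoints. You can recover that rate by also using the right endpoint. Values larger than $\pi(j)$ lie either outside $[i,j]$ or in the ``above'' count, so $n-\pi(j)\le$ loss $<L$ and $j>n-L$. Union-bounding over pairs of position--value pairs then gives $O(L^4/n^2)$. The same accounting sharpens your left-endpoint bound to $\pi(i)-1\le$ loss, i.e.\ $\pi(i)<L+1$. What the paper's formulation buys is mainly thematic: it reads $c[i,j]$ as the number of sample points in an axis-parallel rectangle, which is the viewpoint reused for Theorem~\ref{sqaure}.

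\textbf{Cosmetic points.} From $i-1<L$ you get $i\le\lceil L\rceil$ rather than $i\le L$; you use the former anyway. The step $\pi(i)<\pi(j)$ in the lower bound tacitly needs $t<n/2$. This is harmless, since the claim is vacuous once $\omega(n)\ge\sqrt n$.
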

Theorem \ref{expectation} implies that for most permutations $\pi$, $c(\pi)$ is very close to $n$, so extremal examples such as the ones witnessing equality in theorem \ref{caged} are extremely rare. \\

As alluded to earlier, the RSK correspondence gives a very precise answer for the length of longest monotone subsequences in a given permutation. In particular, if the shape of the SYT in the RSK correspondence is fixed, the length of monotone subsequences are fixed as well. If $N=mn$, permutations with no monotone  increasing subsequences of length $n+1$ or decreasing subsequences of length $m+1$ (also called the Erd\H{o}s-Szekeres permutations) are precisely the permutations whose RSK shape is an $m\times n$ rectangle.   It is now a natural question to investigate how large $c(\pi)$ becomes if $\pi$ is a uniformly random Erd\H{o}s-Szekeres permutation. The next theorem addresses this particular question; it turns out that $c(\pi)$ is significantly lower than in the random case. 

\begin{theorem}\label{sqaure} Suppose $0<\theta\le 1$ be a fixed rational, and let $m=\theta n$.
    Let $\mathrm{ES}_{m,n} \subset S_{N} $ be the set of all Erd\H{o}s-Szekeres permutations for $N = mn$. Let $\pi$ be chosen uniformly at random from $\mathrm{ES}_{m,n}$. Then, there exists $ \varepsilon_0=\varepsilon(\theta) > 0$, such that  
    \[
    \lim_{n \rightarrow \infty} \mathbb{P}_{\pi \sim \mathrm{ES}_{m,n}}\left( c(\pi) \geq (1-\varepsilon_0)N \right) = 0.
    \]
    In the special case that $m=n$, with high probability, $c(\pi) \sim \beta N$ for 
    \[
    \beta \approx 0.732.
    \]
\end{theorem}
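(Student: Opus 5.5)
The natural tool is the limit shape theorem for uniformly random Erd\H{o}s--Szekeres permutations, due to Romik for the square case and extended to rectangles via the hook-walk and the RSK jeu de taquin description. It says that the rescaled permutation graph $\{(i/N,\pi(i)/N)\}$ concentrates, with high probability, on an explicit region $R_\theta\subset[0,1]^2$. For $\theta=1$ this region is bounded by the curve $(x-y)^2+\dots$, i.e.\ $x^2+y^2-xy\ldots$ in Romik's normalisation. Moreover, the empirical measure of the points converges weakly to a deterministic density $\rho_\theta$ on $R_\theta$. I would take this limit theorem as the input and reduce the computation of $c(\pi)$ to a deterministic variational problem.

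\textbf{Step 1: the continuum functional.} A caged subsequence with ends at positions $i<j$ can use every index $k\in(i,j)$ whose value lies strictly between $\pi(i)$ and $\pi(j)$. Hence
\[
c[i,j]=2+\#\{k\in(i,j):\pi(k)\in(\min(\pi(i),\pi(j)),\max(\pi(i),\pi(j)))\}.
\]
This is exactly the number of points of the permutation graph in the axis-parallel rectangle spanned by $(i,\pi(i))$ and $(j,\pi(j))$. So $c(\pi)$ is the maximal point count of a rectangle whose two opposite corners are points of $\pi$. In the limit, for two points $p,q\in R_\theta$, set $F(p,q)=\int_{\mathrm{Rect}(p,q)}\rho_\theta$. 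The rectangles form a VC class, so a uniform law of large numbers over all rectangles, combined with the weak convergence to $\rho_\theta$, gives the following. Uniformly over all such rectangles the counts are $N\cdot F+o(N)$, and the available corners are $o(1)$-dense in $\overline{R_\theta}$. Therefore, with high probability,
\[
c(\pi)/N\to\beta_\theta:=\sup_{p,q\in\overline{R_\theta}}F(p,q).
\]

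\textbf{Step 2: $\beta_\theta<1$.} The region $R_\theta$ is strictly contained in $[0,1]^2$; for $\theta=1$ it is the rotated ``circle-like'' domain touching the four sides only at single points. A rectangle with corners in $R_\theta$ therefore cannot cover all of $R_\theta$. Covering everything would require corners near $(0,0)$ and $(1,1)$, or near $(0,1)$ and $(1,0)$, which lie outside $\overline{R_\theta}$. Since $\rho_\theta$ is positive on the interior, the uncovered part has positive mass. By compactness and continuity of $F$ the supremum is attained, so $\beta_\theta<1$, and we may take $\varepsilon_0=(1-\beta_\theta)/2$. To get the probability bound directly we also need an upper deviation estimate: the event that some rectangle contains $(1-\varepsilon_0)N$ points must have vanishing probability. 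This follows from the uniform concentration in Step 1. I expect to need a quantitative form of the limit shape, namely that points rarely lie far outside $R_\theta$. That should come from the large-deviation bound on the hook-walk or on tableau shapes, and this step is the main obstacle. In particular, one must make sure that no $o(1)$-fraction of outlying points can serve as corners giving a larger rectangle. A single outlier corner adds only $o(N)$ points, but it could enlarge the rectangle. So we need that, with high probability, every point of $\pi$ is within $o(1)$ of $\overline{R_\theta}$, which is part of Romik's theorem for the square case.

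\textbf{Step 3: the square constant.} For $m=n$, I would insert Romik's explicit density and optimise $F$. By the symmetries $\pi\mapsto\pi^R$ and $\pi\mapsto\overline{\pi}$, which preserve $\mathrm{ES}_{n,n}$, it suffices to take $p$ and $q$ symmetric about the centre, on the boundary curve. This reduces the problem to a one-parameter maximisation, which is evaluated numerically to obtain $\beta\approx0.732$.
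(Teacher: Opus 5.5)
Your Step 1 reduction and your Step 3 are what the paper does for $\beta$. In Step 1, a caged sequence with ends $i<j$ counts the points of the plot in the axis-parallel rectangle spanned by $(i,\pi(i))$ and $(j,\pi(j))$, so one maximizes the limiting mass of such rectangles. In Step 3, you take the rectangle with corners $(\pm a,\pm b)$ on $\partial\mathcal{Z}$ and optimize one parameter.

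The real difference is how you get $\varepsilon_0$. You derive it from a law of large numbers $c(\pi)/N\to\beta_\theta$ together with $\beta_\theta<1$. The paper never identifies a limit for general $\theta$; it argues locally instead. A caged sequence of length $(1-\varepsilon_0)N$ must have its left end among the first $\varepsilon_0 N$ positions, carrying a value within about $\varepsilon_0 N$ of $1$ or of $N$. The level curves of $L$ (via the sandwich theorem) show that, with high probability and up to $o(N)$, positions $\le\delta N$ carry only values in $[\alpha N,(1-\alpha)N]$, with $\alpha\to 1/2$ as $\delta\to 0$. The paper's route needs only the support statement: every point lies near a region that avoids the corners of the square. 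Yours needs weak convergence of the empirical measure and uniform control of rectangle counts, but it buys the stronger conclusion that $c(\pi)/N$ concentrates for every $\theta$.

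Two of your justifications need repair.

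\emph{Uniform rectangle counts.} The points of a random Erd\H{o}s--Szekeres permutation are not i.i.d., so a VC/Glivenko--Cantelli argument does not apply. The estimate instead follows deterministically from two facts. First, by the sandwich theorem and the limit shape for both tableaux (uniform over all cells), each point $(q_{i,j},p_{m+1-i,j})/N$ is within $o(1)$ of a deterministic function of $(i/m,j/n)$. Second, the limit measure has uniform marginals, so strips of width $\epsilon$ carry mass $O(\epsilon)$. This also disposes of your ``main obstacle'' about outlying corners.

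\emph{Symmetric maximizer.} Invariance of the law under $\pi\mapsto\pi^R$ and $\pi\mapsto\overline{\pi}$ does not by itself make the maximizing rectangle symmetric. The paper instead asserts that the vertices may be taken on $\partial\mathcal{Z}$, which forces the form $(\pm a,\pm b)$; some argument of this kind is needed.

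Finally, Romik's explicit density is unnecessary for the numerics. In $[-1,1]^2$ coordinates, $\mathcal{Z}=\{(x^2-y^2)^2+2(x^2+y^2)\le 3\}$, and the limit measure has uniform marginals. Take $(a,b)\in\partial\mathcal{Z}$ with $a,b>0$. The strips $\{|x|>a\}$ and $\{|y|>b\}$ are disjoint inside $\mathcal{Z}$, so the rectangle $[-a,a]\times[-b,b]$ has mass $1-(1-a)-(1-b)=a+b-1$. Maximizing $a+b$ subject to $\sqrt{1-a^2}+\sqrt{1-b^2}=1$ gives $a=b=\sqrt{3}/2$, so $\beta=\sqrt{3}-1\approx 0.732$. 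This is exactly what the paper's identity $dI/da=-1/2$ encodes.
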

The rest of the paper is organized as follows. We start with some preliminary results, and then proceed to prove theorems \ref{caged} and \ref{asymmetric} in the next two sections. The proofs of theorems \ref{expectation} and \ref{sqaure} will follow in the subsequent section. We end with some concluding remarks and open questions.\\

\textbf{AI Usage declaration}: The authors declare that no AI tool was used to prove any of the results that appear in this paper.

\section{Preliminaries}
Clearly, the maximum value that can be achieved by $c(\pi)$ is $n$. What can we say about the minimum value that it takes? We will answer this question in the next theorem. But before that, we will see some properties of the ceiling and floor functions ($\lfloor x \rfloor$ and $\lceil x \rceil$) for the quotients. 
\begin{lemma}
    The following holds for any positive integer $x$.
    \begin{enumerate}
        \item $\displaystyle \left\lceil \frac{x}{2} \right\rceil + \left\lfloor \frac{x}{2} \right\rfloor = x $,
        \item $\displaystyle 0 \leq \left\lceil \frac{x}{2}\right\rceil -  \left\lfloor \frac{x}{2} \right\rfloor \leq 1$,
        \item $\displaystyle \left\lceil\frac{1}{m} \left\lceil \frac{x}{n}\right\rceil \right\rceil = \left\lceil \frac{x}{mn} \right\rceil$ for all positive integers $m, n$, \label{floor_floor 3}
        \item $\displaystyle \left\lceil\frac{x}{2}\right\rceil - \left\lceil\frac{x}{4}\right\rceil = \left\lfloor\frac{x+1}{4}\right\rfloor.$  \label{floor_ceil_5}
    \end{enumerate}
\end{lemma}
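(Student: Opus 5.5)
The natural route is to write $x$ in terms of its residue modulo the relevant denominator and check each identity case by case. For items 1 and 2, write $x = 2q + r$ with $r \in \{0,1\}$. Then $\lfloor x/2 \rfloor = q$ and $\lceil x/2 \rceil = q + r$. Their sum is $2q + r = x$, and their difference is $r \in \{0,1\}$. This settles both items at once.

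For item 3, I would avoid case analysis and use the standard characterization that, for integers $a$ and $d \ge 1$, $\lceil a/d \rceil \ge t$ holds iff $a > (t-1)d$ for every integer $t$. Let $y = \lceil x/n \rceil$ and fix an integer $t$. Then $\lceil y/m \rceil \ge t$ iff $y > (t-1)m$, i.e.\ $y \ge (t-1)m + 1$. Applying the characterization to $y = \lceil x/n \rceil$, this holds iff $x > (t-1)mn$, which in turn is equivalent to $\lceil x/(mn) \rceil \ge t$. Since the two integers satisfy exactly the same lower bounds $t$, they are equal. The only care needed is that the characterization is valid because $(t-1)m$ and $(t-1)mn$ are integers.

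For item 4, write $x = 4q + r$ with $r \in \{0,1,2,3\}$. Then $\lceil x/2 \rceil = 2q + \lceil r/2 \rceil$, $\lceil x/4 \rceil = q + \lceil r/4 \rceil$, and $\lfloor (x+1)/4 \rfloor = q + \lfloor (r+1)/4 \rfloor$. After cancelling $q$, it remains to check $\lceil r/2 \rceil - \lceil r/4 \rceil = \lfloor (r+1)/4 \rfloor$ for the four residues:
\begin{itemize}
\item $r=0$: $0 - 0 = 0$;
\item $r=1$: $1 - 1 = 0$;
\item $r=2$: $1 - 1 = 0$;
\item $r=3$: $2 - 1 = 1$.
\end{itemize}
In every case the right-hand side matches, so the identity holds.

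There is no real obstacle here. The only point requiring care is stating the ceiling characterization correctly in item 3, and I would spell it out explicitly rather than assert it.
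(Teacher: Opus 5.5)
The paper gives no proof of this lemma; it says the properties are easy to verify and omits them. Your argument fills that gap and is correct: the residue computations for items 1, 2 and 4 check out, including all four cases $r\in\{0,1,2,3\}$ in item 4. Your threshold argument for item 3 is also sound. It in fact gives more than stated, since $\lceil y\rceil \ge t \iff y > t-1$ holds for every real $y$ and integer $t$, so $\lceil \lceil x/n\rceil/m\rceil = \lceil x/(mn)\rceil$ holds for all real $x$, not only positive integers.
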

These properties are easy to verify, so we omit their proofs. 

 In order to prove theorem \ref{sqaure}, we need some further notation and results. Recall that we can talk unambiguously about the shape of any permutation - this is just the shape of the SYT corresponding to $\pi$ under the RSK correspondence. \\
 
 For a rectangular-shaped permutation, it turns out that there is a much simpler way to read the permutation from the SYT pair $(P,Q)$, courtesy of \cite{Sur_math} :
 \begin{theorem}[General tableau sandwich theorem \cite{Sur_math}]\label{Gen_tab}
    Let $m, n \geq 1$. Denote by $\mathcal{T}\left(\square_{m, n}\right)$, the set of Young tableaux of shape $m \times n$. Also, let $\mathrm{ES}_{m,n}$ be the set of all permutations of $[mn]$ with shape $\square_{m, n}$. There is a bijection from $\mathcal{T}\left(\square_{m, n}\right) \times \mathcal{T}\left(\square_{m, n}\right)$ to $\mathrm{ES}_{m, n}$, described as follows. Given the tableaux $P=\left(p_{i, j}\right)_{\substack{1 \leq i \leq m \\ 1 \leq j \leq n}}, Q=\left(q_{i, j}\right)_{\substack{1 \leq i \leq m \\ 1 \leq j \leq n}} \in \mathcal{T}\left(\square_{m, n}\right)$, the permutation $\sigma \in \mathrm{ES}_{m, n}$ corresponding to the pair ( $P, Q$ ) satisfies
\begin{equation*}
\sigma\left(q_{i, j}\right)=p_{m+1-i, j}, \quad(1 \leq i \leq m, 1 \leq j \leq n) . 
\end{equation*}
\end{theorem}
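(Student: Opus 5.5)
We plan to define the map $\Phi\colon(P,Q)\mapsto\sigma$ by the formula $\sigma(q_{i,j})=p_{m+1-i,j}$. We will show that $\Phi$ lands in $\mathrm{ES}_{m,n}$ and that $\Phi$ is a right inverse of RSK on rectangular pairs, i.e. $\mathrm{RSK}(\Phi(P,Q))=(P,Q)$. Since RSK is a bijection from $S_{mn}$ onto pairs of SYT of equal shape, these two facts make $\Phi$ a bijection onto $\mathrm{ES}_{m,n}$, which is exactly the RSK correspondence restricted to $\mathrm{ES}_{m,n}$.

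\textbf{Step 1: $\sigma$ has rectangular shape.} Take positions $k=q_{i,j}<k'=q_{i',j'}$ with $\sigma(k)<\sigma(k')$, that is, $p_{m+1-i,j}<p_{m+1-i',j'}$. Suppose $j'\le j$. Then $q_{i,j}<q_{i',j'}$ forces $i<i'$, because $Q$ increases along rows and columns. Hence $m+1-i>m+1-i'$, and monotonicity of $P$ gives $p_{m+1-i,j}>p_{m+1-i',j'}$, a contradiction. So along any increasing subsequence of $\sigma$ the column index $j$ strictly increases, and therefore $\mathrm{LIS}(\sigma)\le n$. The symmetric argument shows that the row index $i$ strictly increases along any decreasing subsequence, so $\mathrm{LDS}(\sigma)\le m$. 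By Schensted's theorem the RSK shape $\lambda$ of $\sigma$ satisfies $\lambda_1\le n$ and $\lambda'_1\le m$. Since $|\lambda|=mn$, we conclude $\lambda=\square_{m,n}$.

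\textbf{Step 2: RSK recovers $(P,Q)$.} This is the main step. We argue by induction on $k$ with the following invariant. After inserting $\sigma(1),\dots,\sigma(k)$, the cells $\mu^{(k)}=\{(i,j):q_{i,j}\le k\}$ form a Young diagram. The insertion tableau $P_k$ has shape $\mu^{(k)}$, and column $j$ of $P_k$ consists of the bottom $\mu'_j$ entries of column $j$ of $P$, shifted up. Explicitly, $P_k(r,j)=p_{m-\mu'_j+r,\,j}$.

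For the inductive step, let $k+1=q_{i,j}$, so the new cell is $(i,j)$ with $\mu'_j=i-1$, and the inserted value is $x=p_{m+1-i,j}$. We verify that $x$ bumps exactly down column $j$.
\begin{itemize}
\item In row $1$, any entry in a column $c<j$ is $p_{m+1-\mu'_c,c}$ with $\mu'_c\ge i$. It sits weakly above and strictly left of $x$ in $P$, so it is smaller than $x$.
\item The row-$1$ entry in column $j$ is $p_{m+2-i,j}>x$, so $x$ bumps it.
\item The bumped entry is inserted into row $2$, and the same comparison applies there. The bumping path therefore runs straight down column $j$ and ends by creating the cell $(i,j)$.
\end{itemize}
After this insertion, column $j$ of $P_{k+1}$ reads $p_{m+1-i,j},\dots,p_{m,j}$, which is the invariant for $\mu^{(k+1)}$. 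The recording tableau places $k+1$ in cell $(i,j)$, matching $Q$. At $k=mn$ we obtain $P_{mn}=P$ and recording tableau $Q$.

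The main obstacle is this bump verification. In particular, one must check that entries in columns $c>j$ of each row never interfere. This is automatic because row insertion stops at the first entry larger than the value being inserted, and column $j$ already supplies such an entry. One must also check that, in row $r$, the entries to the left are smaller than the bumped value $p_{m+r-i,j}$; this follows from $\mu'_c\ge\mu'_j+1$ for $c<j$ together with the monotonicity of $P$.

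Combining Steps 1 and 2 proves the theorem.
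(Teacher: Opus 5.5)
The paper does not prove this statement. It imports it from \cite{Sur_math} and uses it as a black box, so there is no in-paper argument to compare yours with. Your proposal is a correct, self-contained proof.

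\textbf{Step 2.} The invariant $P_k(r,j)=p_{m-\mu'_j+r,\,j}$ is the right one, and the comparisons you list are exactly the ones needed:
\begin{itemize}
\item In row $r$, the entries in columns $c<j$ are smaller than the incoming value. They lie weakly above and strictly left of it in $P$, because $\mu'_c\ge i$.
\item The entry in column $j$ is larger, since it is the next entry down column $j$ of $P$.
\end{itemize}
Two boundary situations are covered by your phrase ``ends by creating the cell $(i,j)$'', but are worth stating explicitly. When $i=1$, the value $x$ is simply appended to row $1$. In row $i$, the last bumped value is appended, because $\mu_i=j-1$ for the addable cell $(i,j)$.

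\textbf{Bijectivity.} Your deduction is sound. $\mathrm{RSK}\circ\Phi=\mathrm{id}$ gives injectivity of $\Phi$. For surjectivity, take $\sigma\in\mathrm{ES}_{m,n}$; then $\Phi(\mathrm{RSK}(\sigma))$ and $\sigma$ have the same RSK image, so they coincide.

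\textbf{Step 1.} This step is logically redundant. Once Step 2 gives $\mathrm{RSK}(\Phi(P,Q))=(P,Q)$, the shape is $\square_{m,n}$ automatically. So Schensted's theorem is not needed, though including the step does no harm.

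\textbf{A forward alternative.} One can also argue starting from $\sigma\in\mathrm{ES}_{m,n}$:
\begin{itemize}
\item The successive occupants of a first-row cell $(1,j)$ form a decreasing subsequence of $\sigma$, so there are at most $m$ of them.
\item The $n$ first-row cells absorb all $mn$ values, so each absorbs exactly $m$.
\item Bumped values only move weakly left. A right-to-left induction over the columns of the final tableau then shows that every value ends in the column where it entered row $1$.
\item Hence every bumping route is vertical, and the formula $\sigma(q_{r,j})=p_{m+1-r,j}$ can be read off directly.
\end{itemize}
That route explains \emph{why} rectangular shape forces vertical bumping, and it uses the Erd\H{o}s--Szekeres property in an essential way. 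Your verification runs in the opposite direction. It is more mechanical, but it uses nothing except the row and column monotonicity of $P$ and $Q$, and it identifies the map with RSK restricted to $\mathrm{ES}_{m,n}$ in one stroke.
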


The special case when $m=n$ is produced below for the sake of convenience.
\begin{theorem}[The tableau sandwich theorem for square-shaped permutations]\label{sandwich}
    \hspace{5mm} Let $\mathcal{T}_n$ be the set of square $n \times n$ standard Young tableaux. There is a bijection from $\mathcal{T}_n \times \mathcal{T}_n$ to $ E S_n$, defined as follows: to each pair of tableaux $P=\left(p_{i, j}\right)_{i, j=1}^n, \quad Q=\left(q_{i, j}\right)_{i, j=1}^n$ corresponds the permutation $\pi \in  E S_n$ given by
\begin{equation*}
\pi\left(q_{i, j}\right)=p_{n+1-i, j}, \quad(1 \leq i, j \leq n) .
\end{equation*}
\end{theorem}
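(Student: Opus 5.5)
This statement is the case $m=n$ of the general tableau sandwich theorem (Theorem \ref{Gen_tab}), so I would obtain it by specializing rather than reproving anything. Set $m=n$ in Theorem \ref{Gen_tab}. The index set $\{1\le i\le m,\ 1\le j\le n\}$ becomes $\{1\le i,j\le n\}$. The tableau family $\mathcal{T}(\square_{n,n})$ is the set $\mathcal{T}_n$ of square $n\times n$ standard Young tableaux. The target set $\mathrm{ES}_{n,n}$ is the set $ES_n$ of Erd\H{o}s--Szekeres permutations of $[n^2]$, i.e.\ those of square RSK shape. The defining rule $\sigma(q_{i,j})=p_{m+1-i,j}$ then becomes $\pi(q_{i,j})=p_{n+1-i,j}$, which is exactly the displayed formula. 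Bijectivity is inherited verbatim.

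The only point needing a sentence of care is that the rule really defines a permutation of $[n^2]$. This holds because the entries $q_{i,j}$ of $Q$, and likewise the entries $p_{n+1-i,j}$ of $P$, each run over $[n^2]$ exactly once as $(i,j)$ ranges over $[n]^2$. I would also remark that the resulting $\pi$ agrees with the RSK bijection $\pi\mapsto(P(\pi),Q(\pi))$ restricted to square shape, since this is how Theorem \ref{Gen_tab} is stated in \cite{Sur_math}.

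If a self-contained argument were wanted instead of citing \cite{Sur_math}, I would argue as follows.
\begin{enumerate}
\item Show that for a rectangular shape, Schensted insertion of $\pi(1),\ldots,\pi(N)$ never bumps an element out of the bottom row prematurely.
\item Deduce that each new element lands in column $j$ exactly when it is the $j$-th element of a decreasing chain.
\item Conclude that the recording tableau $Q$ records the positions column by column, with row indices reversed relative to $P$.
\end{enumerate}
The main obstacle in that route is step 2, an inductive invariant on insertion paths. Since the general theorem is already stated earlier in the paper, the specialization above suffices and nothing is expected to be difficult.
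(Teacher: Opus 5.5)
Your proposal is correct and matches the paper, which gives no separate argument and simply records this statement as the $m=n$ case of the general tableau sandwich theorem (Theorem~\ref{Gen_tab}) quoted from the literature. The optional Schensted sketch is not needed. If you ever pursue it, correct step 2: the column in which $\pi(k)$ enters the first row is the length of the longest \emph{increasing} subsequence ending at $\pi(k)$, not a decreasing one; in the Erd\H{o}s--Szekeres setting, decreasing chains instead determine the row of the new box of $Q$.
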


It turns out that the asymptotics of the shape of a random square Young tableaux is reasonably well-understood, courtesy of \cite{lim_shapes}:
\begin{theorem}[Limit shape theorem for square Young tableaux]\label{limit_shape}
     Let $\mathcal{T}_n$ be the set of $n \times n$ square Young tableaux and let $\mathbb{P}_n$ be the uniform probability measure on $\mathcal{T}_n$. Then for the function $L:[0,1] \times[0,1] \rightarrow[0,1]$ defined below, we have: \\
     (Uniform convergence to the limit shape) For all $\epsilon>0$,
$$
\mathbb{P}_n\left(T \in \mathcal{T}_n: \max _{1 \leqslant i, j \leqslant n}\left|\frac{1}{n^2} t_{i, j}-L\left(\frac{i}{n}, \frac{j}{n}\right)\right|>\epsilon\right) \underset{n \rightarrow \infty}{\longrightarrow} 0 .
$$
\end{theorem}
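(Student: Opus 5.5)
The statement is the Pittel--Romik limit shape theorem, and I would follow the variational (large deviations) route used for the Logan--Shepp / Vershik--Kerov problem. I encode a tableau $T\in\mathcal{T}_n$ as a growing family of Young diagrams. For $0\le s\le 1$, let $\lambda^{(s)}$ be the diagram of cells with $t_{i,j}\le s n^2$. Rescaling by $1/n$, its boundary gives a monotone height function $\phi_T(x,s)$ on $[0,1]^2$, with level sets $\{(x,y): \phi_T(x,y)\le s\}$. The theorem then says that $\phi_T$ converges uniformly to $L$. Since $t_{i,j}$ is increasing in both $i$ and $j$, pointwise convergence in probability on a finite mesh plus monotonicity upgrades to uniform convergence. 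So it suffices to prove a large deviation principle for the rescaled surface.

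\textbf{Step 1 (exact count).} By the hook length formula, $|\mathcal{T}_n|=(n^2)!/\prod_{i,j}(i+j-1)$. For a fixed admissible profile, the number of tableaux passing through a prescribed sequence of intermediate shapes $\mu_1\subset\mu_2\subset\cdots\subset\mu_K$ factorizes as a product of skew-tableau counts. Each skew count is bounded above and below by hook-type products up to $e^{o(n^2)}$ factors, for example via the Naruse formula or by sandwiching skew counts between $f^{\mu_{r+1}}/f^{\mu_r}$-type quantities.

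\textbf{Step 2 (entropy functional).} Passing to the limit $K\to\infty$ after $n\to\infty$, I obtain
\[
\frac{1}{n^2}\log \mathbb{P}_n(\phi_T\approx \phi)=-\bigl(I(\phi)-I(L)\bigr)+o(1)
\]
for a functional $I$. Here $I$ is a hook-integral / logarithmic-energy term minus the entropy $\int\!\!\int \log(\text{local slope})$ of the surface. I would show that $I$ is lower semicontinuous and strictly convex in suitable rotated coordinates, where the hook term becomes a negative-definite logarithmic kernel as in Logan--Shepp. It therefore has a unique minimizer on the compact set of admissible monotone surfaces.

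\textbf{Step 3 (identify the minimizer).} I would solve the Euler--Lagrange equation. For each level $s$, it reduces to a singular integral equation of Logan--Shepp type on the level curve. The solution is the explicit algebraic surface $L$ defining the level curves in Pittel--Romik. I would verify directly that $L$ satisfies the equation, rather than derive it, and then invoke uniqueness.

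\textbf{Step 4 (concentration and conclusion).} Combining the upper bound of Step 1 with compactness, the set $\{\|\phi_T-L\|_\infty>\epsilon\}$ can be covered by $e^{o(n^2)}$ neighbourhoods. Each neighbourhood has probability at most $e^{-c(\epsilon)n^2}$, where $c(\epsilon)=\inf_{\|\phi-L\|\ge\epsilon}(I(\phi)-I(L))>0$. This gives the stated convergence, in fact with super-exponential decay.

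I expect the main obstacle to be Step 2/3: proving uniform estimates on skew-shape counts good enough to get the functional at precision $o(n^2)$, and solving the variational problem with its boundary constraints. For the estimates, I would first try bounding skew counts by ratios of ordinary hook formulas over nested diagrams. For the variational problem, I would try reducing to the one-parameter family of Logan--Shepp problems on each level curve, with the square boundary imposing the constraints, and then checking the explicit candidate.
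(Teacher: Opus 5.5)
\textbf{Verdict: genuine gap.} The paper itself gives no proof of this statement; it quotes the result from Pittel and Romik, whose argument works one level at a time. Measured against that, your proposal misses the fact that makes the problem tractable, and the estimate you put in its place in Step 1 is false.

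\textbf{The missing idea.} For a fixed level $s$ and $k=\lfloor sn^2\rfloor$, the diagram $\lambda^{(s)}$ has the exact law
\[
\mathbb{P}_n\bigl(\lambda^{(s)}=\lambda\bigr)=\frac{f^{\lambda}\,f^{\square_n/\lambda}}{f^{\square_n}},\qquad |\lambda|=k .
\]
The skew shape $\square_n/\lambda$, rotated by $180^\circ$, is a straight shape, so all three factors are given exactly by the hook length formula. This yields, for each single level, a large deviation principle at speed $n^2$. Its rate function is, up to an additive constant, the sum of the hook integrals of $\lambda$ and of its rotated complement. That is a Logan--Shepp-type functional with a logarithmic kernel, hence convex, to be minimized over boundary curves inside the square enclosing area $s$.

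Since only $\binom{2n}{n}=e^{O(n)}$ diagrams fit in the square, a union bound replaces your covering argument. Pittel and Romik solve the constrained minimization, obtaining $v=g_s(u)$. Finitely many levels plus monotonicity then give the uniform statement, exactly as in your first paragraph. No general skew shapes and no surface functional are needed.

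\textbf{Why Step 1 fails.} It does not deliver $e^{o(n^2)}$ precision, for two reasons.
\begin{itemize}
\item The sandwich $f^{\mu_{r+1}/\mu_r}\le f^{\mu_{r+1}}/f^{\mu_r}$ telescopes. It bounds the number of tableaux through $\mu_1\subset\cdots\subset\mu_K=\square_n$ by $f^{\square_n}$, i.e.\ gives the vacuous bound $\mathbb{P}\le 1$.
\item The hook-type term $|\lambda/\mu|!/\prod_{u\in\lambda/\mu}h_\lambda(u)$ (the leading term of Naruse's formula) is only a lower bound, and it is off by a factor exponential in the area. For $\lambda=(m+1,m,\dots,1)$ and $\mu=(m-1,\dots,1)$, the strip $\lambda/\mu$ is a fence. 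So $f^{\lambda/\mu}=E_{2m+1}$, the Euler zigzag number, while the hook term is $(2m+1)!/3^m$; their ratio is asymptotic to $\frac{8}{\pi^2}(12/\pi^2)^m$.
\end{itemize}
The same happens for strips of small but macroscopic width. The entries on the diagonals $j-i=c$ form interlacing bead configurations. By my computation from that bead-model entropy, the per-cell loss tends to $\log\frac{\sin(\pi\theta)}{\pi\theta(1-\theta)}$, which is positive for $0<\theta<1$; here $\theta=\partial_x\phi/(\partial_x\phi+\partial_y\phi)$ is the local tilt of the surface. So the $K$ errors add up to order $n^2$ however large $K$ is, and the functional you would extract is wrong.

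\textbf{Why Steps 2--3 are conflated.} A genuine surface LDP for the uniform measure has a purely local rate function $-\iint \mathrm{ent}(\nabla\phi)$, with no hook or log-kernel term. Computing $\mathrm{ent}$ for tableaux is a substantial result in its own right. The Euler--Lagrange equation of such a functional is a nonlinear PDE coupling all levels. It does not split into independent Logan--Shepp problems, one per level, as your Step 3 assumes. That splitting, and the log-kernel convexity you invoke, appear only after contracting to a single level, that is, through the exact formula above.

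\textbf{Repair.} Replace Steps 1--2 by the single-level computation. The remaining real work is the constrained variational problem of Step 3, where guessing $g_s$ and verifying it is a legitimate strategy.
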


The limit surface $L$ in the aforementioned theorem is described in terms of its level curves $\{  L(x, y) = \alpha \}$. We borrow the notation and terminology from \cite{Sur_math}. Consider the rotated coordinate system 
\begin{align*}
    u = \frac{x-y}{\sqrt{2}}, \hspace{5mm} v = \frac{x + y}{\sqrt{2}}. 
\end{align*}
\begin{figure}
\centering
    \begin{subfigure}[b]{0.41\textwidth}            
            \includegraphics[width=\textwidth]{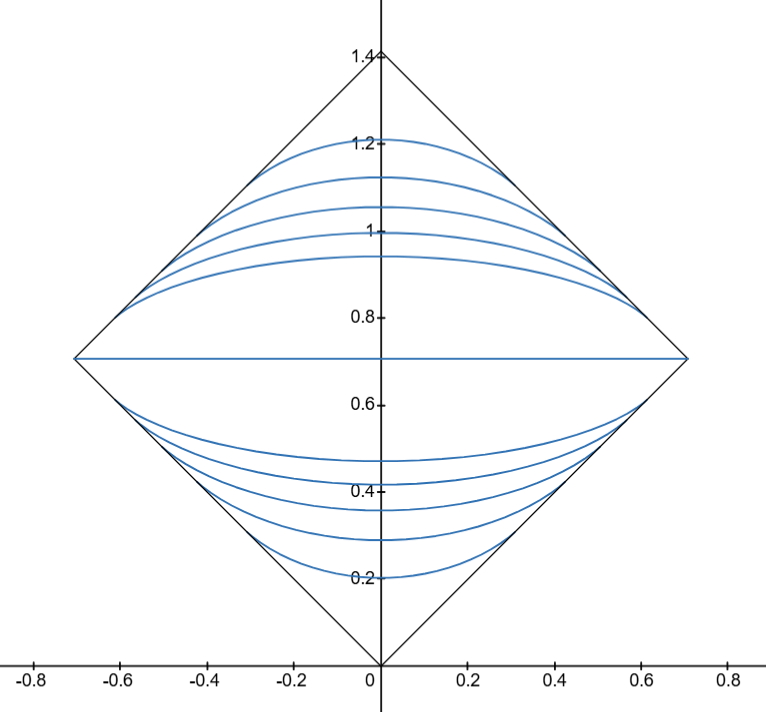}
            \caption{Rotated coordinate system}
            \label{fig:rot_lvl}
    \end{subfigure}%
    \hspace{7mm}
    \begin{subfigure}[b]{0.35\textwidth}
            \centering
            \includegraphics[width=\textwidth]{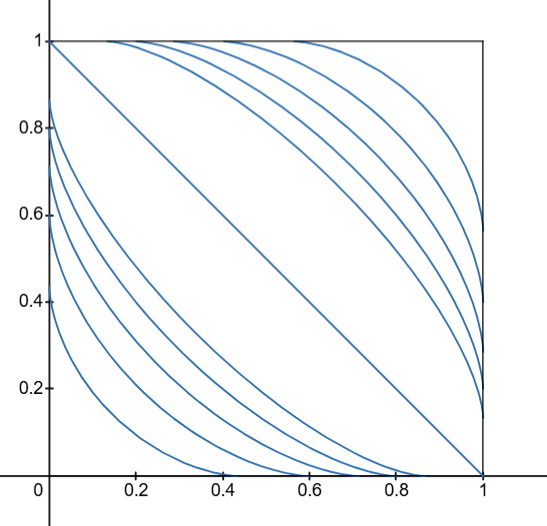}
            \caption{Usual coordinate system}
            \label{fig:usual}
    \end{subfigure}
    \caption{Level curves for limit surface}\label{fig:lvl}
\end{figure} 
Then in the $u-v$ plane, the square $[0,1] \times [0, 1]$ transforms into the rotated square:
\[
\diamond=\left\{(u, v) \in \mathbb{R}^2:|u| \leqslant \sqrt{2} / 2,|u| \leqslant v \leqslant \sqrt{2}-|u|\right\} .
\]
Now, define the one-parameter family of functions $\left(g_\alpha\right)_{0 \leqslant \alpha \leqslant 1}$ given by
\begin{align}
    g_\alpha: & [-\sqrt{2  \alpha(1-\alpha)},  \sqrt{2 \alpha(1-\alpha)}] \rightarrow \mathbb{R},   \label{eq: 6} \\
   & g_\alpha(u)= \begin{cases}\frac{2}{\pi} u \tan ^{-1}\left(\frac{(1-2 \alpha) u}{\sqrt{2 \alpha(1-\alpha)-u^2}}\right)+\frac{\sqrt{2}}{\pi} \tan ^{-1}\left(\frac{\sqrt{2\left(2 \alpha(1-\alpha)-u^2\right)}}{1-2 \alpha}\right) & 0 \leqslant \alpha<\frac{1}{2},  \\ -\frac{2}{\pi} u \tan ^{-1}\left(\frac{(2 \alpha-1) u}{\sqrt{2 \alpha(1-\alpha)-u^2}}\right)-\frac{\sqrt{2}}{\pi} \tan ^{-1}\left(\frac{\sqrt{2\left(2 \alpha(1-\alpha)-u^2\right)}}{2 \alpha-1}\right)+\sqrt{2} & \frac{1}{2}<\alpha \leqslant 1, \\ \frac{\sqrt{2}}{2} & \alpha=\frac{1}{2} .\end{cases} \notag
\end{align}
Then in the rotated coordinate system, the surface $\bar{L}(u, v)=L(x(u, v), y(u, v))$ can be described as the surface whose level curves $\{\bar{L}(u, v)=\alpha\}$ are exactly the curves $\left\{v=g_\alpha(u)\right\}$. That is,
$$
\{(u, v) \in \diamond: \bar{L}(u, v)=\alpha\}=\left\{(u, v) \in \diamond:|u| \leqslant \sqrt{2 \alpha(1-\alpha)}, v=g_\alpha(u)\right\} .
$$
See figure \ref{fig:lvl}. Note that we are following the French convention, so the level curves are increasing from bottom to top. \\

 Denote by $A_\pi$,  the graph of the permutation $\pi$, i.e., let $A_{\pi}:=\{(i,\pi(i)):i\in [n^2]\}$. Another important result from \cite{tab_san} that will be of importance for us is the following theorem about the shape of random square-shaped permutations.
\begin{theorem}\label{shape_square}
    Define the set
$$
\mathcal{Z}=\left\{(x, y) \in[-1,1] \times[-1,1]:\left(x^2-y^2\right)^2+2\left(x^2+y^2\right) \leq 3\right\}
$$

Then: (i) For any open set $U$ containing $\mathcal{Z}$,
$$
\mathbb{P}_n\left[\pi \in  \mathrm{ES}_n:\left(\frac{2}{n^2} A_\pi-(1,1)\right) \subset U\right] \xrightarrow[n \rightarrow \infty]{ } 1 .
$$
(ii) For any open set $U \subset \mathcal{Z}$,
$$
\mathbb{P}_n\left[\pi \in \mathrm{ES}_n:\left(\frac{2}{n^2} A_\pi-(1,1)\right) \cap U \neq \emptyset\right] \underset{n \rightarrow \infty}{\longrightarrow} 1 .
$$
\end{theorem}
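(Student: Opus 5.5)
Since the graph is read off from a pair of tableaux, the plan is to reduce the statement to the limit shape theorem through the tableau sandwich theorem. Let $\pi\in\mathrm{ES}_n$ correspond to the pair $(P,Q)$ under Theorem \ref{sandwich}. Then
\[
A_\pi=\{(q_{i,j},\,p_{n+1-i,j}) : 1\le i,j\le n\}.
\]
Under the uniform measure on $\mathrm{ES}_n$, the tableaux $P$ and $Q$ are independent and each is uniform on $\mathcal{T}_n$. So Theorem \ref{limit_shape} applies to both, and a union bound gives the following with probability tending to $1$:
\[
\max_{i,j}\left|\tfrac{1}{n^2}q_{i,j}-L(\tfrac in,\tfrac jn)\right|<\epsilon
\quad\text{and}\quad
\max_{i,j}\left|\tfrac{1}{n^2}p_{n+1-i,j}-L(1-\tfrac{i-1}{n},\tfrac jn)\right|<\epsilon .
\]
Define the continuous map
\[
\Phi:[0,1]^2\to[0,1]^2,\qquad \Phi(x,y)=\bigl(L(x,y),\,L(1-x,y)\bigr).
\]
On the good event, every point of $\frac1{n^2}A_\pi$ lies within $O(\epsilon)+o(1)$ of $\Phi([0,1]^2)$. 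The $o(1)$ term comes from the index shift $i\mapsto i-1$ and is controlled by the uniform continuity of $L$. Conversely, every point $\Phi(x,y)$ is within $O(\epsilon)+o(1)$ of some point of $\frac1{n^2}A_\pi$: take the grid point $(\lceil xn\rceil,\lceil yn\rceil)$.

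Both parts then follow from the single geometric identity
\[
2\,\Phi([0,1]^2)-(1,1)=\mathcal{Z}.
\]
For (i), $\mathcal{Z}$ is compact and $U$ is open, so $U$ contains a closed $\delta$-neighbourhood of $\mathcal{Z}$. Choosing $\epsilon$ small relative to $\delta$ puts the rescaled graph inside $U$ with probability tending to $1$. For (ii), an open $U\subset\mathcal{Z}$ (take it nonempty) contains a ball $B(z,\delta)$. By the identity, $z$ corresponds to some $\Phi(x,y)$, and the grid point near $(x,y)$ produces a point of the rescaled graph in $B(z,\delta)$ on the good event.

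The main obstacle is computing the image of $\Phi$. I would describe it fiberwise. Fix a level $\alpha$ of the first coordinate; by the description of $\bar L$ this is the curve $v=g_\alpha(u)$. Along that curve, determine the range of the second coordinate $L(1-x,y)$. In the rotated coordinates, $x\mapsto 1-x$ is a reflection of the diamond $\diamond$. So the second coordinate is $\bar L$ evaluated on the reflected curve, and its extreme values occur either at the endpoints $u=\pm\sqrt{2\alpha(1-\alpha)}$ (where the curve meets $\partial\diamond$) or at interior points where the level curve of $\bar L$ is tangent to a reflected level curve.

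I would use the symmetries of $L$, namely $L(x,y)=L(y,x)$ and $L(x,y)=1-L(1-x,1-y)$, to reduce the search to a single one-parameter family. I would first test whether the extremes occur on the boundary of the square, since $L$ restricted to an edge is explicit via the formula for $g_\alpha$. That would give a parametrized boundary curve, and the remaining task is to verify that it satisfies
\[
(X^2-Y^2)^2+2(X^2+Y^2)=3,\qquad X=2L(x,y)-1,\ Y=2L(1-x,y)-1.
\]
If the extremes turn out to be interior tangency points, I would instead locate the critical points of $\Phi$ (where its Jacobian vanishes), differentiate the implicit relation $v=g_\alpha(u)$, and eliminate the parameters. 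This is the computation I expect to be hardest. Finally, connectedness of each fiber, which holds because $\Phi$ is continuous on a connected level curve, shows that the image fills the whole region between the extremes.
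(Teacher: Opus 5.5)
The paper does not prove this statement. It imports it from the literature, and in the proof of Theorem~\ref{sqaure} it uses exactly your map, in the form $\varphi=2\Phi-(1,1)$, as a bijection of the square onto $\mathcal{Z}$ sending the edges onto $\partial\mathcal{Z}$ via the explicit edge values of $L$. Your probabilistic reduction matches this and is sound:
\begin{itemize}
\item $P$ and $Q$ are independent and uniform, because the sandwich map is a bijection from $\mathcal{T}_n\times\mathcal{T}_n$.
\item The index shift is absorbed by uniform continuity of $L$.
\item The compactness arguments for (i) and (ii) are correct; use $\max(1,\lceil xn\rceil)$ when $x=0$.
\end{itemize}
There is, however, a genuine gap: the identity $2\Phi([0,1]^2)-(1,1)=\mathcal{Z}$ is never established. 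It carries the entire content of the statement, since the quartic enters nowhere else. You leave it as a conditional plan (``first test whether the extremes occur on the boundary \dots\ if they turn out to be interior tangency points \dots''). You neither decide which case occurs nor carry out either computation, so as written neither (i) nor (ii) is proved.

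The missing idea is monotonicity of $L$. It rules out interior tangencies and reduces everything to the edge formulas $L(t,0)=L(0,t)=\frac{1-\sqrt{1-t^2}}{2}$ and $L(1,t)=L(t,1)=\frac{1+\sqrt{2t-t^2}}{2}$, so your first guess is the right one. $L$ is nondecreasing in each variable, being a limit of normalized tableau entries, and it is strictly increasing along the edges.

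Fix $\alpha\le\frac12$ and let $t=2\sqrt{\alpha(1-\alpha)}$, so that $L(t,0)=L(0,t)=\alpha$. If $L(x,y)=\alpha$, then $L(x,0)\le\alpha$ and $L(0,y)\le\alpha$ force $x\le t$ and $y\le t$, whence
\[
L(1-t,0)\le L(1-x,y)\le L(1,t).
\]
Both bounds are attained, at the endpoints $(t,0)$ and $(0,t)$ of the connected level curve $v=g_\alpha(u)$. By continuity, the fibre of $\Phi([0,1]^2)$ over $\alpha$ is therefore exactly $[L(1-t,0),L(1,t)]$. After rescaling this reads $X=-\sqrt{1-t^2}$, $|Y|\le\sqrt{2t-t^2}$.

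On the $\mathcal{Z}$ side, set $h(s)=(X^2-s)^2+2(X^2+s)-3$. It satisfies $h'(s)=2(1-X^2+s)>0$ for $s>0$ and $h(2t-t^2)=0$. Hence the slice of $\mathcal{Z}$ at this $X$ is the same interval.

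The range $\alpha\ge\frac12$ follows from two symmetries. First, $\Phi(1-x,1-y)=(1,1)-\Phi(x,y)$, a consequence of $L(x,y)=1-L(1-x,1-y)$. Second, $\mathcal{Z}$ is symmetric under $(X,Y)\mapsto(-X,-Y)$.

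With strict monotonicity of $L$ in each variable you get more: $L(1-x,y)$ is strictly monotone along every level curve of $L(x,y)$. So $\Phi$ is injective, which is the bijection the paper invokes.
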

 The first statement states that, as $n \rightarrow \infty$, a suitably scaled $A_\pi$ will be contained in $\mathcal{Z}$, while the second statement emphasizes that this containment will be dense in $\mathcal{Z}$. See figure \ref{fig:Z}. \\
\section{Proof of Theorem \ref{caged}}
\begin{proof}
    If $k=1, \, 2$ i.e., when $n = 5, \, 10$, the Erd\H{o}s-Szekeres theorem implies that any $\pi \in S_5$ (respectively, $S_{10}$) contains a monotone subsequence of length $3$ (respectively, $4$) and so the result follows immediately, so we shall assume that $k > 2$. \\
    
      Let $n = 5k$ and let $\pi \in S_n$. As we did in the introduction, 
     let $\pi(j_0) =n$ and $\displaystyle j_0 > \left\lfloor\frac{n}{2}\right\rfloor = \left\lfloor\frac{5k}{2}\right\rfloor $.
     For $i \in [1,j_0]$, define $rank(i)$ as 
    \[
    rank(i) := \#  \left\{ k \in [1,j_0]\, \big| \, \pi(k) \leq \pi(i)  \right\}. 
    \] 
    Let $\pi(i_0) = 1$. \\
    
    \textbf{Case 1: $i_0 > \displaystyle \left\lfloor\frac{n}{2}\right\rfloor$}. \\
     Since $j_0 \geq \displaystyle \left\lceil\frac{n}{2}\right\rceil$, there is an element with rank $\displaystyle \left\lceil \frac{1}{2} \left\lceil\frac{n}{2}\right\rceil \right\rceil = \left\lceil\frac{n}{4}\right\rceil$ (property \ref{floor_floor 3}). If $rank(1) > \displaystyle \left\lceil\frac{n}{4}\right\rceil$, then note that $c[1,i_0] \geq \displaystyle \left\lceil\frac{n}{4}\right\rceil + 1$, and since $ \displaystyle \left\lceil\frac{n}{4}\right\rceil + 1 = \left\lceil\frac{5k}{4}\right\rceil + 1 \geq k + 2$ for all positive integers $k$, we are done in this case. so suppose $rank(1) = r\leq  \displaystyle \left\lceil\frac{n}{4}\right\rceil$.\\
     \begin{figure}[h]
        \centering
        \includegraphics[width=1\textwidth]{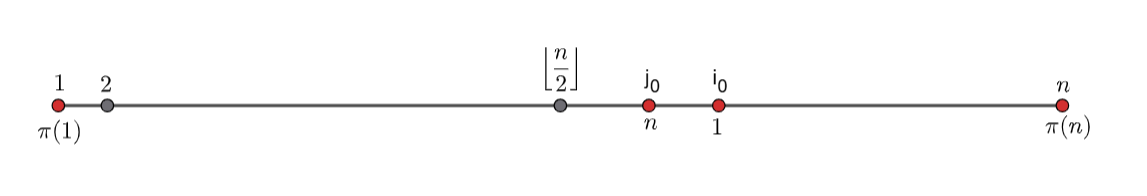}
        \caption{ $ i_0 > \displaystyle \left\lfloor\frac{n}{2}\right\rfloor$ }
    \end{figure} \\
    In this case,
    \begin{align*}
        c[1,j_0] = j_0 - (r-1)  & \geq      \displaystyle \left\lceil\frac{n}{2}\right\rceil - (r-1) \\
        & \geq  \left\lceil\frac{n}{2}\right\rceil - \left\lceil\frac{n}{4}\right\rceil + 1 \\
        & = \left\lfloor\frac{n+1}{4}\right\rfloor + 1.
    \end{align*} where the last step follows from property \ref{floor_ceil_5}. Since $\displaystyle \left\lfloor\frac{n + 1}{4}\right\rfloor + 1  = \left\lfloor\frac{5k+1}{4}\right\rfloor + 1 \geq k + 2$ for all integers $k \geq 3$, we are done in the case when $i_0 > \displaystyle \left\lfloor\frac{n}{2}\right\rfloor$. \\
   
    \textbf{Case 2: $i_0 \leq \displaystyle \left\lfloor\frac{n}{2}\right\rfloor$}. \\
    Consider $d = j_0 - i_0 \geq 1$. Since $\pi(i_0) =1$ and $\pi(j_0) = n$, 
    \begin{align}\label{eq: 1}
        c[i_0, j_0] = d + 1
    \end{align}
    Again, let $rank(1) = r$, so that 
    \begin{align}\label{eq: 2}
        c[1, j_0] = j_0 - (r-1),
    \end{align}
    as $rank(j_0) = j_0$.
    Also, 
    \begin{align}\label{eq: 3}
        c[1, i_0]  \geq r-(d-1),
    \end{align}
    Indeed, at most $(d-1)$ elements less than $\pi(1)$ can occur between $i_0$ and $j_0$ and these will not be included in the interval $[1, i_0]$. \\
     \begin{figure}[h]
        \centering
        \includegraphics[width=1\textwidth]{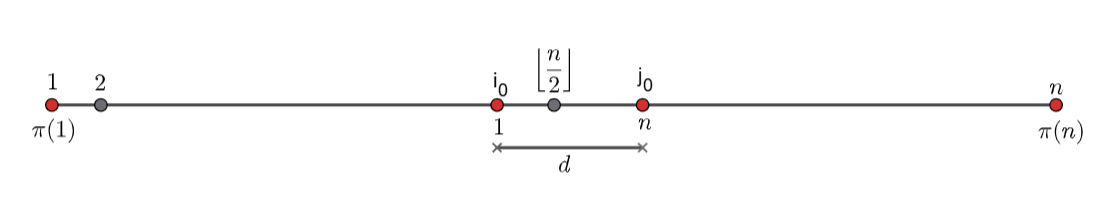}
        \caption{ $ \displaystyle i_0 \leq \left\lfloor\frac{n}{2}\right\rfloor$ }
        \label{fig 2}
    \end{figure} \\
    
    Define $rank_2(i)$ for $i \in [i_0, n]$ as
    \[
    rank_2(i) := \#  \left\{ k \in [i_0,n]\, \big| \, \pi(k) \leq \pi(i)  \right\}.
    \]
    and let $rank_2(n) = s$. Then
    \begin{align}\label{eq: 4}
        c[i_0,n] = s
    \end{align}
    and
    \begin{align}\label{eq: 5}
        c[j_0, n] \geq (n-j_0+1) - (s-2),
    \end{align}
    as at most $s-2$ elements smaller than $\pi(n)$ (excluding $1 = \pi(i_0)$ and $\pi(n)$ itself) can occur in the interval $[j_0, n]\setminus \{ j_0, n\}$ which will not count towards enumerating the elements of $c[j_0,n]$.\\
    
    Now adding equations \ref{eq: 1}, \ref{eq: 2} and \ref{eq: 4}, along with inequalities \ref{eq: 3} and \ref{eq: 5} we have,
    \begin{align*}
        c[i_0, j_0] +  c[1, j_0] +  c[1, i_0] + &  c[i_0,n] + c[j_0, n] \\
        & \geq  d + 1 + j_0 - (r-1) + r-(d-1) + s + (n-j_0+1) - (s-2) \\
        & = n + 6 = 5k + 6.\\
    \end{align*}
    Therefore, by the pigeonhole principle, there must be a caged sequence among the LHS of length at least $k + 2$. This establishes the lower bound  in the case $n=5k$. \\
    
    To complete the proof of the general statement of the theorem, we produce a permutation $\pi \in S_n$ with $n = 5k + 4$ for any positive integer $k$ that has caged length $c(\pi) = k +2$.\\
    
    Let us first introduce some further notation. For any distinct positive integers $m$ and $n$, $[m,n]$ shall denotes either of the following \emph{as a sequence}:
    \[
        [m,n] := 
        \begin{cases}
            (m,m+1,m+2,\dots,n), & \text{if $m < n$} \\
            (m, m-1, m-2, \dots, n), & \text{otherwise.}   
        \end{cases}  
    \]
Consider
    \[
    \pi_{ext}  := [3k+3, \, 4k+3]\,[k+1, \,1]\,[2k+3, \,3k+2]\,[5k+4, \,4k+4]\,[k+2,\,2k+2] 
    \]
    In the usual permutation form, $\pi_{ext}$ is given by 
    \[
 \begin{pmatrix}
    1 & \cdots & k+1 & k+2 & \cdots & 2k +2 & 2k +3 & \cdots & 3k + 2 & 3k + 3 & \cdots & 4k + 3 & 4k + 4 & \cdots & 5k + 4 \\
    3k + 3 & \cdots & 4k + 3 & k + 1 & \cdots & 1 & 2k + 3 & \cdots & 3k + 2 & 5k + 4 & \cdots & 4k + 4 & k + 2 & \cdots & 2k + 2 \\
  \end{pmatrix}
    \]
    To see why $c(\pi_{ext})  = k+2 $, partition the linear order form of the permutation as follows. 
    \[
    \pi_{ext}  = \underbrace{[3k+3, \, 4k+3]}_\text{$\bar{\mathbf{1}}$}\,\underbrace{[k+1, \,1]}_\text{$\bar{\mathbf{2}}$}\,\underbrace{[2k+3, \,3k+2]}_\text{$\bar{\mathbf{3}}$}\,\underbrace{[5k+4, \,4k+4]}_\text{$\bar{\mathbf{4}}$}\,\underbrace{[k+2,\,2k+2]}_\text{$\bar{\mathbf{5}}$}
    \] and let $\bar{\mathbf{i}}, i=1,\ldots,5$ denote the intervals as indicated above.\\
    
    By abuse of notation, by $c[\bar{\mathbf{i}}, \bar{\mathbf{j}}]$ indicate the length of any maximal caged sequence with endpoints in $\bar{\mathbf{i}}$ and $\bar{\mathbf{j}}$, respectively.  Observe that for any $i \in [5]$, $c[\bar{\mathbf{i}}, \bar{\mathbf{i}}] \leq k +1$ as the length of these intervals is at most $k + 1$ . Similarly, it is easy to see that $c[\bar{\mathbf{2}}, \bar{\mathbf{3}}] = |\bar{\mathbf{3}}| + 1 = k + 1$. By the same logic, $c[\bar{\mathbf{3}}, \bar{\mathbf{4}}] = k + 1 $ while $c[\bar{\mathbf{1}}, \bar{\mathbf{2}}] = c[\bar{\mathbf{1}}, \bar{\mathbf{4}}]= c[\bar{\mathbf{2}}, \bar{\mathbf{5}}] = c[\bar{\mathbf{4}}, \bar{\mathbf{5}}]= k + 2$. Finally, $c[\bar{\mathbf{1}}, \bar{\mathbf{5}}] = c[\bar{\mathbf{2}}, \bar{\mathbf{4}}] = |\bar{\mathbf{3}}| + 1 + 1 = k + 2$ since every element of the interval labeled $\bar{\mathbf{3}}$  is counted alongside at most one element from either of the terminal intervals ($\bar{\mathbf{1}}, \bar{\mathbf{5}}$ and $\bar{\mathbf{2}}, \bar{\mathbf{4}}$ respectively). Lastly, observe that $c[\bar{\mathbf{1}}, \bar{\mathbf{3}}], c[\bar{\mathbf{3}}, \bar{\mathbf{5}}]\le 2$. This establishes $c(\pi_{ext}) = k + 2$ and
 shows that for $n = 5k + 4$ we have $\min_{\pi \in S_n} c (\pi)\le k+2$. This, in conjunction with the lower bound established earlier for $n=5k$, together completes the proof of the more general statement of the theorem:
    \[
    \min_{\pi \in S_n} c (\pi) = \left\lfloor\frac{n}{5}\right\rfloor + 2.
    \]  
\end{proof}

\section{Proof of Theorem \ref{asymmetric} }

\begin{proof}
    Observe that $\pi^R$ defined earlier is a bijection from $S_n \longrightarrow S_n$ and flips a BCS into an FCS and vice versa. Consequently, it follows that $\mathcal{C}(k,l) = \mathcal{C}(l,k)$ for any positive integers $l,k \geq 3$, so it only remains to show that
    $$\mathcal{C}(k,l) =\mathcal{C}(l,k)=\min \{3k + 2l, 2k + 3l\} - 10.$$ 
    Without loss of generality, we may assume that $k \leq l$, so it suffices to show that 
    \[
     \mathcal{C}(k,l) = \mathcal{C}(\ell,k)  = 3k + 2\ell- 10.
    \]
    Let  $n = 3k + 2l - 10$ and $\pi \in S_n$. Again, let $\pi(i_0) = 1$ and $\pi(j_0) = n$. We have the following cases:
    \begin{enumerate}
   \item  \textbf{Case 1: $1<i_0 < j_0 < n$}. \\
    Observe that this case is similar to the case \ref{fig 2} in the proof of Theorem \ref{caged}. So, we borrow all the notation and definitions from there. Let $rank(1)=r$ and $rank_2(n)=s$. Also, let $d = j_0 - i_0 \geq 1$. Then all the equations and inequalities from \ref{eq: 1} to \ref{eq: 5} are valid. \\
    Now assume in contrast that the permutation $\pi$ does not contain a $k-$FCS and a $l-$BCS. Thus, we have
    \begin{align*}
        c[i_0, j_0] & \leq k-1, & c[1, j_0] & \leq k-1, \\
        c[1, i_0] & \leq l-1, &  c[i_0, n] & \leq k -1, \\
        c[j_0, n] & \leq l-1
    \end{align*} (see Fig. \ref{fig 2})
    Combining these inequalities with \ref{eq: 1} to \ref{eq: 5}, we get
    \begin{align*}
        3k + 2l -5 =  (k-1) +  (k-1) & + (l-1)  + (k-1) + (l-1) \\
         & \geq c[i_0, j_0] +  c[1, j_0] +  c[1, i_0] +   c[i_0,n] + c[j_0, n] \\
        & \geq  d + 1 + j_0 - (r-1) + r-(d-1) + s + (n-j_0+1) - (s-2) \\
        & = n + 6 \\
        & =  3k + 2l - 4,
    \end{align*} which is a contradiction, Therefore, $\pi$ either contains a $k-$FCS or an $\ell-$BCS.
    \item \textbf{Case 2: $1=i_0 < j_0 < n$}:
     In this case, if $c[1, j_0] = j_0 \geq k $, we are done. Thus, assume that $j_0 \leq k-1$. Redefine $rank_2(i)$ for all $i \in [j_0,n] $ as follows:
     \[
       rank_2(i)  := \#  \left\{ k \in [j_0,n]\, \big| \, \pi(k) \leq \pi(i)  \right\} 
     \]
     Let $rank_2(n) = s$. Note that there are $n-j_0+1 \geq 3k + 2l-10 - (k-1) + 1 = 2k + 2l - 8$ elements in $[j_0,n]$. Therefore, if $s \geq k + l -4$ $\left(=\displaystyle \frac{1}{2}(2k + 2l -8)\right)$, 
     $$c[1,n] \geq s + 1 \geq k + l -3 \geq k,$$
     as $l \geq 3$. 
     On the other hand, if $s < k + l -4$, 
     \[
      c[j_0, n] = (n-j_0+1) - (s-1) > 2k + 2l - 8 - (k + l - 4 - 1) = k + l - 3 \geq l,
     \] as $k \geq 3$. \\
     (Observe that in the definition of $rank_2(i)$, $\pi(1)=1$ is not counted.) \\
     Therefore, we are done in both cases.     
\item     \textbf{Case 3: $1<i_0<j_0=n$}. \\
     This case is similar to the previous case. Redefining $rank(i)$ for $i \in [1, i_0]$ and following similar arguments, we can easily get the desired result. Therefore, we omit the proof here. \\
     Next, we will go for the cases where $j_0< i_0$. 
\item    \textbf{Case 4: $1=j_0 < i_0 < n$}. \\
     If $c[1, i_0] = i_0 \geq l$, we are done. Therefore, assume $i_0 \leq l-1$. Now, define $rank_2(i)$ for all $i \in [i_0,n]$ as follows:
     \[
     rank_2(i)  := \#  \left\{ k \in [i_0,n]\, \big| \, \pi(k) \leq \pi(i)  \right\} 
     \]
     Let $rank_2(n) = s$. Since there are $n-i_0+1 \geq 3k + 2l -10 - (l-1) + 1 = 3k + l - 8$ elements in $[i_0, n]$, there exists a $i$ with $rank_2(i) = k$ (as $k < 3k + l - 8 \iff 2k + l  > 8$ which is true as $k, l \geq 3$). Thus, if $rank_2(n) = s \geq k$, $c[i_0,n] = s \geq k $, so we get a $k-$FCS. Also, if $s < k$, 
     \[
     c[1, n] \geq (n - i_0+1) - (s-1) + 1 > 3k + l - 8 - k + 2 = 2k + l - 6 \geq l,
     \]
    as $k \geq 3$, so we get a $(l+1)-$BCS.
\item     \textbf{Case 5: $1<j_0<i_0=n$}. \\
    This case can be proved in a similar way as the last case by redefining $rank(i)$ appropriately. Therefore, we omit its proof. 
\item    \textbf{Case 6: $1<j_0<i_0<n$}.     Let $\pi(1) = a_1$ and $\pi(n) = a_n$. \\

    \textbf{Case 6(a): $a_1>a_n$}. \\
    It is similar to the first case of this proof, except that the order of $i_0$ and $j_0$ is reversed. Thus, we will redefine the functions $rank(i)$ and $rank_2(i)$ according to the need:
    \begin{align*}
        rank(i) & := \#  \left\{ k \in [1,i_0]\, \big| \, \pi(k) \leq \pi(i)  \right\},  \\
         rank_2(i) & := \#  \left\{ k \in [j_0,n]\, \big| \, \pi(k) \leq \pi(i)  \right\} 
    \end{align*}
    Assume $rank(1) = r$ and $rank_2(n) = s$. Define the sets $X, \, Y$ and $Z$ as follows:
    \begin{align*}
        X & :=\{ \pi(i) \vert \, 1\leq i \leq j_0-1 \}, \\
        Y & := \{ \pi(i) \vert \, j_0\leq i \leq i_0 \}, \\
        Z & := \{ \pi(i) \vert \, i_0+ 1\leq i \leq n \}.
    \end{align*}
    Also, let
    \begin{align*}
        r_1 &  = \#\{ x < a_1 \, \vert \, x \in X \}, &
        r_2 & = \#\{ y < a_1 \, \vert \, y \in Y \} ,\\
        s_1 & = \#\{ z < a_n \, \vert \, z \in Z \}, &
        s_2  &  = \#\{ y < a_n \, \vert \, y \in Y \}.
    \end{align*}
    Clearly, $r-1 = r_1 + r_2$ and $s-1 = s_1+s_2$. Now, following earlier arguments (see \ref{eq: 1} to \ref{eq: 5}), we get  
    \begin{align*}
        c[j_0, i_0] & = d + 1  ,    &        c[1, j_0] & = j_0 - r_1, \\
         c[1, i_0] &  = r , &         c[i_0,n] & = s_1 + 2, \\
         c[j_0, n] & = (n-j_0+1) - (s-1).
    \end{align*}
    If $\pi$ does not contain a $k-$FCS and a $l-$BCS, then it follows that
    \begin{align*}
        c[j_0, i_0] & \leq l-1, & c[1, j_0] & \leq k-1, \\
        c[1, i_0] & \leq l-1, &  c[i_0, n] & \leq k -1, \\
        c[j_0, n] & \leq l-1.
    \end{align*}
    Now, combining the above two sets on inequalities,
    \begin{align*}
       & n-j_0+1 - s + 1  \leq l-1 \hspace{5mm} & (\text{since } n-j_0+ 1-(s-1) = c[j_0, n] \leq l-1) \\
       \implies & 3k + 2l - 8 - j_0 - s  \leq l - 1 & (\text{since } n = 3k + 2l - 10) \\
       \implies & 3k + l  \leq j_0 + s + 7 \\
       \implies & 3k + r + 1 \leq j_0 + s + 7 & (\text{since } r = c[1, i_0] \leq l -1 ) \\
       \implies & 3k + r_1 + r_2  \leq j_0 + s_1 + s_2 + 6 & (\text{since } r-1 = r_1 + r_2, \, s-1 = s_1 + s_2) \\
       \implies & 2k + j_0 + r_2 + 1 \leq j_0 + s_1 + s_2 + 6 &(\text{since } j_0 - r_1 = c[1, j_0] \leq k -1) \\
       \implies & 2k + r_2 \leq s_1 + s_ 2 + 5 \\
       \implies & 2k + r_2 \leq k + s_2 + 2  & (\text{since } s_1 + 2 = c[i_0, n] \leq k -1) \\
       \implies & r_2 - s_2 \leq 2 - k.
    \end{align*}
    Since $a_1>a_n$, by definition, $r_2 \geq s_2$ which implies $k\leq 2$, a contradiction. Therefore, this case must also give either a $k-$FCS or a $l-$BCS. \\    \textbf{Case 6(b): $a_1<a_n$}. \\
    Note that for any $1 \leq i<j\leq n$, 
    \[
    c[i, j] = \sum_{k:i<k<j} \mathbb{1}_{\{\pi(i) < \pi(k) < \pi(j) \text{ or } \pi(i) > \pi(k) > \pi(j)\}} + 2.
    \]
    Take any permutation $\pi$. Let $\sigma = \pi^{-1} $ (the usual group theoretic inverse). If for any $i<j<k$, $\pi(i)<\pi(j)<\pi(k)$, then in $\sigma$, we have $\sigma(\pi(i))<\sigma(\pi(j)) < \sigma(\pi(k))$ for $\pi(i)<\pi(j)<\pi(k)$. Similarly, if for any $i<j<k$, $\pi(i)>\pi(j)>\pi(k)$, then in $\sigma$, we have $\sigma(\pi(k)) >\sigma(\pi(j)) > \sigma(\pi(i)) $ for $\pi(k)<\pi(j)<\pi(i)$. In simple terms, this means that for every FCS and BCS in $\pi$, there is an equivalent FCS and BCS in $\sigma$ respectively. \\
    Now, consider any permutation $\pi$ corresponding to the given case. Let $\sigma = \pi^{-1} $. Then we have $\sigma(a_1)=1$ and $\sigma(a_n)=n$. Thus, for $\sigma$, $i_0(\sigma) = a_1$ and $j_0(\sigma) = a_n$. By our assumption, we have $1<i_0(\sigma)<j_0(\sigma)<n$ which is exactly the first case of this proof. Thus, the result holds for $\sigma$ and hence for $\pi$ as well. This finishes the proof for the sixth and last case. \\
    So far, we have proved that for any $k, l \geq 3$,
    \[
    \mathcal{C}(k,l) = \mathcal{C}(l,k)  \leq  \min \{ 3k + 2l, 2k + 3l\} - 10.
    \]
    To obtain the equality, we must give a permutation $\pi(k,l) \in S_{n-1}$ that contains neither a $k-$FCS nor a $l-$BCS. Again, without loss of generality, we assume $k\leq l$ and thus take $n = 3k + 2l -10$. Using the earlier notation of $[m,n]$, we define the permutation as 
    \[
    \pi(k,l) := \underbrace{[2k+l-6,\, 3k+l-9]}_\text{$\bar{\mathbf{1}}$} \underbrace{[l-2, \, 1]}_\text{$\bar{\mathbf{2}}$}\underbrace{[k+l-3, \, 2k + l - 7]}_\text{$\bar{\mathbf{3}}$}\underbrace{[3k + 2l -11, \, 3k + l - 8]}_\text{$\bar{\mathbf{4}}$}\underbrace{[l-1, \, k + l -4]}_\text{$\bar{\mathbf{5}}$}.
    \]
    Recall the earlier notation $c[\bar{\mathbf{i}}, \bar{\mathbf{j}}]$. For $i = 1,3,5$, the FCSs $c[\bar{\mathbf{i}}, \bar{\mathbf{i}}] \leq k -2 $ as the length of any of these intervals is at most $k -2$. Similarly, for $i=2,4$, the BCSs $ c[\bar{\mathbf{i}}, \bar{\mathbf{i}}] = l -2  $. Other major BCSs $c[\bar{\mathbf{1}}, \bar{\mathbf{2}}]$ and $c[\bar{\mathbf{4}}, \bar{\mathbf{5}}]$ are of length $l-1$. The BCS $c[\bar{\mathbf{1}}, \bar{\mathbf{5}}] $ is of length $k-1 \leq l-1$ (note that $|\bar{\mathbf{3}}|=k-3$). Similarly, the major FCSs $c[\bar{\mathbf{1}}, \bar{\mathbf{4}}] $, $c[\bar{\mathbf{2}}, \bar{\mathbf{4}}] $, and $c[\bar{\mathbf{2}}, \bar{\mathbf{5}}] $ are of length $k-1$. The rest can be argued in the same way. Ultimately, there is neither a $k-$FCS nor a $l-$BCS. \\
    This shows that the inequality proved earlier is indeed an equality, and hence establishes the statement of the theorem: for any $k,l \geq 3,$ 
    \[
     \mathcal{C}(k,l) = \mathcal{C}(l,k)  = \min \{ 3k + 2l, 2k + 3l\} - 10.
    \]\end{enumerate}
\end{proof}
\subsection{Caged length vs number of runs: some remarks}
\begin{definition}[Peaks and Valleys]
    Given a permutation $\pi=(\pi(1), \pi(2), \dots, \pi(n))$, an element $\pi(i)$ (for $2\le i\le n-1$) is called a peak if $\pi(i-1)<\pi(i)>\pi(i+1) $ (a local maxima). Similarly, an element $\pi(i)$ is called a valley if $\pi(i-1)>\pi(i)<\pi(i+1) $ (a local minima).  
\end{definition}
$\pi(1)$ and $\pi(n)$ are not classified as peaks or valleys. 
\begin{definition}[Runs in a permutation]
    The contiguous increasing or decreasing segments between two successive peaks and valleys are called runs.
\end{definition}
In our earlier example of permutation $(8,9,5,6,14,13,2,16,15,1,4,3,11,12,7,10)$ of $[16]$, $9, \, 14, \, 16, \, 4$ and $ 12  $ are peaks, while $5, \, 2, \, 1, \, 3 $ and $7$ are valleys. Blocks like $(5, 6, 14)$ and $(3, 11, 12)$ are increasing runs and those like $(14, 13, 2)$ and $(16, 15, 1)$ are decreasing runs. It is easy to see that 
\[
\#\text{runs in a permutation} = \#peaks \, + \, \#valleys + 1.
\]
Note that the extremal examples of permutations that we have seen so far, $\pi_{ext}$ and $\pi(k,l)$, contained exactly five runs. So, it naturally begs the following question: If we change the number of runs, do the bounds still hold or do we get different lengths? \\

Let $S_n^k$ denote the set of all permutations of $[n]$ with exactly $k$ runs. Clearly, if there are exactly $k$ runs in $\pi$, then at least one run has length at least $n/k$, then $c(\pi)\ge  n/k$ and for for $1\le k\le 5$, this is clearly also best possible.  For $k\ge 5$, it is apriori possible that an increase in the number of runs might result in the presence of larger caged sequences. Perhaps surprisingly, this is not the case.
\begin{proposition}
    For $k \geq 5$, 
    \[
    \min_{\pi \in S_n^k} c(\pi) \sim \frac{n}{5}.
    \]
    In fact, if $\pi$ is an `zig-zag' permutation, that is, if $k=n-1$, we have
    \[
    \min_{\pi \in S_n^{n-1}} c(\pi) \leq \left\lfloor \frac{n}{5} \right\rfloor + 6.
    \]
\end{proposition}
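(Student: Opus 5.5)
The lower bound is immediate from Theorem \ref{caged}: every $\pi\in S_n$, hence every $\pi\in S_n^k$, satisfies $c(\pi)\ge \lfloor n/5\rfloor+2$. So both statements reduce to constructing permutations with the prescribed number of runs and caged length $n/5+O(1)$. I will perturb the extremal permutation $\pi_{ext}$ locally rather than build something new.

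\textbf{Zig-zag case.} Take $\pi_{ext}$ (with $n=5m+4$, say) with its five monotone blocks $\bar{\mathbf{1}},\dots,\bar{\mathbf{5}}$. Inside each block, swap the entries in positions $2t-1,2t$ for every $t$. An increasing block $(a,a+1,a+2,a+3,\dots)$ becomes $(a+1,a,a+3,a+2,\dots)$, and a decreasing block is treated the same way. After this, every interior position is a peak or a valley. At the four junctions between blocks, I will shift the pairing by one position, or handle the last element of a block separately, so that alternation is preserved across the boundary. Boundary effects like this are where the additive constant comes from; one adjustment at each of the four junctions plausibly accounts for the $+6$ instead of $+2$. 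For arbitrary $n$ I pad with at most four extra elements, or take the general-$n$ version of $\pi_{ext}$.

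\textbf{Main step: the caged length barely changes.} A caged sequence with endpoints at positions $i<j$ consists of $i$, $j$, and the positions strictly between them whose values lie strictly between $\pi(i)$ and $\pi(j)$. The swapped permutation $\pi'$ moves each value by at most one position and changes values only by $\pm1$ within the same block. Therefore, for endpoints in blocks $\bar{\mathbf{a}},\bar{\mathbf{b}}$, the set counted by $\pi'$ differs from the set counted by $\pi_{ext}$ for nearby endpoints in at most $O(1)$ elements, and these extra elements lie near the endpoints. To make this precise, I will redo the block-by-block bound from the proof of Theorem \ref{caged}: $c[\bar{\mathbf{i}},\bar{\mathbf{i}}]\le |\bar{\mathbf{i}}|$, $c[\bar{\mathbf{2}},\bar{\mathbf{3}}]\le|\bar{\mathbf{3}}|+O(1)$, $c[\bar{\mathbf{1}},\bar{\mathbf{5}}]\le |\bar{\mathbf{3}}|+O(1)$, and so on. Each of the fifteen block pairs gains at most a constant, because between two values only the block $\bar{\mathbf{3}}$ (or a single block) contributes in bulk, plus at most a couple of elements from the end blocks. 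The same counting with explicit constants should give $\lfloor n/5\rfloor+6$.

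\textbf{General $k\ge5$.} I apply the swapping only to a prefix of one block, for example $\bar{\mathbf{3}}$. This creates about $2$ new runs per swap, and a single extra element can fix the parity. The number of runs then ranges over every value from $5$ to $n-1$. The perturbation argument above is monotone in the set of swapped positions, so $c(\pi)\le n/5+O(1)$ throughout, and together with the lower bound this gives $\min_{S_n^k}c\sim n/5$. I expect the main obstacle to be the careful case check that no block pair gains more than the stated constant, especially pairs such as $c[\bar{\mathbf{1}},\bar{\mathbf{4}}]$, where swaps near the endpoints could admit an extra element on each side.
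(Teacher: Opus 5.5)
Your overall route is the paper's: the lower bound comes from Theorem~\ref{caged}, and the upper bound from adjacent transpositions of consecutive values inside the monotone blocks of $\pi_{ext}$. However, two steps do not go through as you wrote them.

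\textbf{Junctions.} The direction of each junction step is fixed by the value ranges of the blocks, not by how you arrange a block internally. The steps $\bar{\mathbf{1}}\to\bar{\mathbf{2}}$ and $\bar{\mathbf{4}}\to\bar{\mathbf{5}}$ are descents, while $\bar{\mathbf{2}}\to\bar{\mathbf{3}}$ and $\bar{\mathbf{3}}\to\bar{\mathbf{4}}$ are ascents. So in an alternating permutation the first entry of $\bar{\mathbf{3}}$ must be a peak and its last entry a valley, which forces $|\bar{\mathbf{3}}|$ to be even. The same reasoning forces $|\bar{\mathbf{2}}|$ and $|\bar{\mathbf{4}}|$ to be odd. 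For $n=5m+4$ with $m$ odd, all three parities are wrong, and no shift of the pairing can repair this. You must resize the blocks, as the paper does by moving block boundaries with $\bar{\mathbf{1}},\bar{\mathbf{5}}$ as slack. That resizing, not ``one adjustment per junction'', is part of where the additive constant comes from.

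\textbf{General $k$.} As stated, this step also fails: swaps inside a prefix of a single block create only about $n/5$ new runs. To reach every $k\le n-1$ you need subcollections of the swaps of the full zig-zag construction across all blocks. Add one swap at the very start of the permutation, which adds a single run, to fix the parity of $k$.

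\textbf{The constant.} You can skip the fifteen-case check you are worried about. Your perturbation heuristic is exactly right and is a one-line lemma: if $|\pi'(q)-\pi(q)|\le 1$ for all $q$, then $c(\pi')\le c(\pi)+2$. To see this, fix $i<j$. Let $u'<v'$ be the two values $\pi'(i),\pi'(j)$ and $u<v$ the two values $\pi(i),\pi(j)$, so that $u\le u'+1$ and $v\ge v'-1$. Every $q\in(i,j)$ with $u'<\pi'(q)<v'$ has $\pi(q)\in[u',v']\setminus\{u,v\}$. Hence it lies in the $\pi$-cage at $i,j$ unless $\pi(q)\in\{u',v'\}$, which happens for at most two $q$.

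Every subcollection of your swaps satisfies the hypothesis of this lemma. So it only remains to bound the resized five-block permutation, with the same layout as $\pi_{ext}$ and block sizes $a_1,\dots,a_5$. Its caged length is at most $\max(a_1+1,a_2+1,a_3+2,a_4+1,a_5+1)$. Choosing $a_2,a_4$ odd and $a_3$ even, this can be made at most $\lfloor n/5\rfloor+3$. That gives the zig-zag bound with room to spare, and $c(\pi)=n/5+O(1)$ for every $k\ge 5$.
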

\begin{proof}
    Since by Theorem \ref{caged}, $\displaystyle\min_{\pi \in S_n^k} c(\pi) \geq \left\lfloor\frac{n}{5}\right\rfloor + 2$, it is enough to get an example of a permutation with $k$ runs that has the caged length of order $\displaystyle\frac{n}{5}$. To produce such an example, let us take our extremal example with $5$ runs, $\pi_{ext} $. For $m \in \mathbb{N}$,
    \[
    \pi_{ext}  = \underbrace{[3m+3, \, 4m+3]}_\text{$\bar{\mathbf{1}}$}\,\underbrace{[m+1, \,1]}_\text{$\bar{\mathbf{2}}$}\,\underbrace{[2m+3, \,3m+2]}_\text{$\bar{\mathbf{3}}$}\,\underbrace{[5m+4, \,4m+4]}_\text{$\bar{\mathbf{4}}$}\,\underbrace{[m+2,\,2m+2]}_\text{$\bar{\mathbf{5}}$}.
    \]
    If $k = 5$, the example works. If $k > 5$, do the following procedure: \\
    If $k = 6$, take the interval $\bar{\mathbf{1}}$ and perturb it a little from the left end to make an extra run. For example, for $m = 7$, the interval will be $(24, 25, \dots, 31)$, which constitutes a single run. Change it to $(25, 24, 26, \dots, 31)$ to add a valley at $24$. Now you have $6$ runs in total. Similarly, if $k = 7$, move the first $3$ elements, $(24, 25, 26, 27, \dots, 31) \longrightarrow (24, 26, 25, 27, \dots , 31)$, to get $2$ extra runs. \\
    This procedure easily extends to other intervals as well. Note that to accommodate the maximum possible runs, the lengths of the intervals $\bar{\mathbf{2}}$ and $\bar{\mathbf{4}}$ must be odd and that of $\bar{\mathbf{3}}$ must be even. This can be managed by changing the boundaries of the intervals with the intervals $\bar{\mathbf{1}}$ and $\bar{\mathbf{5}}$, if needed. \\
    Observe that the above procedure creates any $k \geq 5$ number of runs using local inversions, while globally maintaining the monotonicity of each interval (see Figure \ref{fig:D-Imager}). These local inversions boost the caged length by at most $2$ as they add an additional element from both ends. Another extra $+2$ comes due to boundary changes, described in the end of the procedure. 
    \begin{figure}
\centering
    \begin{subfigure}[b]{0.5\textwidth}            
            \includegraphics[width=\textwidth]{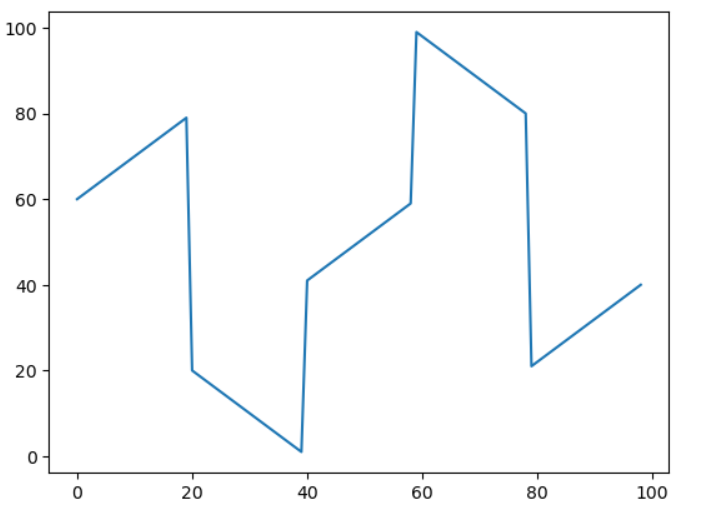}
            \caption{Permutation $\pi_{ext}$,  $k=5$. }
            \label{fig:SRl}
    \end{subfigure}%
    \begin{subfigure}[b]{0.5\textwidth}
            \centering
            \includegraphics[width=\textwidth]{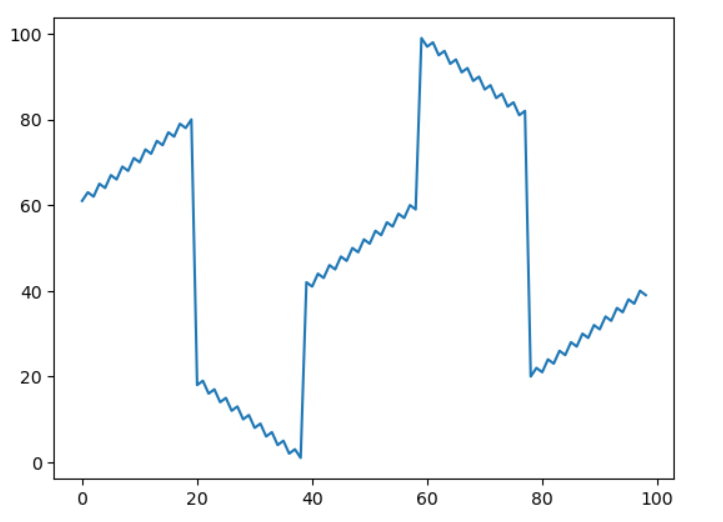}
            \caption{Zig-zag permutation, $k = 98$}
            \label{fig:D-Imager}
    \end{subfigure}
    \caption{Extremal permutations for $n = 99$}\label{fig:ext}
\end{figure}
\end{proof}
\section{$c(\pi)$ for random $\pi$: Proofs of Theorems \ref{expectation} and \ref{sqaure}}
\begin{proof}[Proof of theorem\ref{expectation}]
 Note that a uniformly random permutation can be generated by choosing the $X_i \sim U[0,1]$ i.i.d. for $1 \leq i \leq n$. So, we can work on these $X_i$'s instead. A long caged sequence will appear if two extreme $X_i$'s(say $X_{i_1}$ and $X_{i_2} $) assume the extreme values in the interval $[0, 1]$ and many other $X_j$'s whose indices fall between $i_1$ and $i_2$ take the values in between. We will now implement this idea mathematically. \\
 Without loss of generality, let $\omega(n)$ be any small function $\left(o(\sqrt{n})\right)$ such that $\omega(n) \xrightarrow{n \rightarrow \infty } \infty$. Take $\displaystyle\epsilon = \frac{\omega(n)}{\sqrt{n}}$. Define $\delta := \displaystyle \frac{1 - (1 - \epsilon)^{\frac{1}{3}}}{2} $. Observe that $0<\delta < < 1$ for $n$ is large. Let $\mathcal{E}_1 $ be the event that for some $i \in [1, \delta n]$, $X_i \in [0, \delta ]$. Similarly, let $\mathcal{E}_2 $ be the event that for some $j \in [(1 -\delta) n, n]$, $X_j \in [(1 - \delta), 1]$. Then 
 \[
 \mathbb{P}\left( \mathcal{E}_1 \wedge \mathcal{E}_2 \right) = 1 - \mathbb{P}\left( \bar{\mathcal{E}}_1 \vee \bar{\mathcal{E}}_2 \right)
 \] 
 Since the random variables $X_i$ are independent, $ \mathbb{P}\left( \bar{\mathcal{E}}_1 \right) = \mathbb{P} \left( \bar{\mathcal{E}}_2 \right) = (1 - \delta )^{\delta n} $. By union bound,
 \[
 \mathbb{P}\left( \bar{\mathcal{E}}_1 \vee \bar{\mathcal{E}}_2 \right) \leq 2(1 - \delta )^{\delta n} \leq 2e^{-\delta^2 n}.
 \] 
 So, 
 \[
 \mathbb{P}\left( \mathcal{E}_1 \wedge \mathcal{E}_2 \right) \geq 1 -  2e^{-\delta^2 n}.
 \]
 Define $Y : = \displaystyle \sum_{k \in (\delta n, (1 - \delta)n)} \displaystyle  \mathbb{1}_{X_k \in (\delta , (1- \delta ))} $. Observe that $Y$ corresponds to the length of the caged sequence. Using linearity of expectation, $\mu := \mathbb{E}( Y) = (1 - 2\delta)^2 n$. 
 Let $\mathcal{E}_3$ be the event that $Y$ is at least $(1 - \gamma) \mu$ for some small $\gamma >0$. Using the Chernoff bound for the sum of Bernoulli i.i.ds,
 \[
 \mathbb{P}\left( \bar{\mathcal{E}}_3 \right) = \mathbb{P}\left( Y \leq (1 - \gamma)\mu  \right) \leq \displaystyle e^{-\gamma^2 \mu / 2}.
 \]
 Finally, we use independence to get  
 \[
 \mathbb{P}\left( \mathcal{E}_1 \wedge \mathcal{E}_2 \wedge \mathcal{E}_3  \right) \geq \left( 1 -  2e^{-\delta^2 n} \right) \left( 1 - e^{-\gamma^2 \mu / 2} \right).
 \]
 Put $\gamma = 2 \delta$. Then the above statement implies $ \mathbb{P}\left(c(\pi) \geq (1 - 2 \delta)^3 n \right) \longrightarrow 1$ as $ n \rightarrow \infty$. \\
 By plugging the values of $\delta$ and $ \epsilon $, $\mathbb{P}\left( c(\pi) \geq \left( 1- \frac{\omega(n)}{\sqrt{n}} \right)n \right) \longrightarrow 1 $ as $n$ tends to infinity. \\
 To prove the second statement of the theorem, we need the following lemma:
\begin{lemma}\label{lemma 4.2}
   Let \(X,Y\sim U[0,1]\) be i.i.d. Let \(D = |X-Y|\) and \(N\mid D \sim \text{Bin}(n,D)\). Then for any $d = d(n)$ where $d=o(n)$, we have
   $$ \mathbb{P}(N>n-d(n)) \leq \exp{\left(-\frac{9}{8}\cdot d(n)\right)} + 16\left(\frac{d(n)}{n}\right)^2.$$
\end{lemma}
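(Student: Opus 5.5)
Conditioning on $D$ and splitting according to whether $D$ is close to $1$ gives the two terms of the bound. We have $D\in[0,1]$ with density $2(1-t)$, so $\mathbb{P}(D>1-x)=x^2$ for $x\in[0,1]$. Pick a threshold $x=x(n)$ of order $d/n$, specifically $x=4d/n$. Then
\[
\mathbb{P}(N>n-d)\le \mathbb{P}(D>1-x)+\mathbb{P}\bigl(N>n-d,\ D\le 1-x\bigr)= 16\left(\frac{d}{n}\right)^2+\mathbb{P}\bigl(N>n-d,\ D\le 1-x\bigr).
\]
This accounts for the second term exactly. The remaining task is to show that on $\{D\le 1-4d/n\}$ the binomial tail is at most $e^{-9d/8}$.

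On the event $D=t\le 1-x$, pass to the complement $M=n-N\sim\mathrm{Bin}(n,1-t)$. The event $N>n-d$ is $M<d$. We have $\mathbb{E}[M\mid D=t]=n(1-t)\ge nx=4d$, and the conditional probability $\mathbb{P}(M<d\mid D=t)$ is monotone in $t$. So it suffices to bound $\mathbb{P}(\mathrm{Bin}(n,p)<d)$ with $\mu=np=4d$, i.e.\ the event $M<(1-\gamma)\mu$ with $\gamma=3/4$. The multiplicative Chernoff bound $\mathbb{P}(M\le(1-\gamma)\mu)\le e^{-\gamma^2\mu/2}$ gives $\exp(-\tfrac{9}{16}\cdot\tfrac12\cdot 4d)=e^{-9d/8}$. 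This is exactly the constant in the statement, which confirms that $x=4d/n$ is the intended choice. Integrating this uniform bound over $t\le 1-x$ against the law of $D$ yields $\mathbb{P}(N>n-d,\ D\le 1-x)\le e^{-9d/8}$.

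The only delicate points are bookkeeping. First, the monotonicity in $t$ justifies using the worst case $p=x$; if one prefers to avoid it, one can apply Chernoff with $\mu_t=n(1-t)$ directly and note that $\gamma_t=1-d/\mu_t\ge 3/4$ together with $\mu_t\ge 4d$ makes $\gamma_t^2\mu_t/2\ge 9d/8$. Second, the hypothesis $d=o(n)$ is used only to ensure $x=4d/n\le 1$, so that the computation $\mathbb{P}(D>1-x)=x^2$ is valid, for $n$ large. The main obstacle is really just choosing the split point so that the constants $9/8$ and $16$ come out exactly. Once $x=4d/n$ is fixed, the remaining steps are routine.
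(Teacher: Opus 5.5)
Your proof is correct and matches the paper's argument: the paper also passes to $M=n-N$ with $Q=1-D$ (density $2q$), splits at $q_0=4d/n$ to obtain the $16(d/n)^2$ term, and applies the lower-tail Chernoff bound $\exp\bigl(-(nq-d)^2/(2nq)\bigr)\le e^{-9d/8}$ for $q>q_0$. Your explicit check that this bound holds uniformly over the range (by monotonicity in $t$, or via $\gamma_t\ge 3/4$ and $\mu_t\ge 4d$) fills in a step the paper only asserts.
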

\begin{proof}
Set $t = n - d(n)$ with $d(n) = o(n)$. Write $M = n-N, Q= 1-D$, and define $M$ conditioned on $D$ to be  $\text{Bin}\big(n,Q\big)$. It is straightforward to check that $Q$ has  density $f_Q(q) = 2q$ for $q \in [0, 1]$.\\
Now,
\begin{align*}
    \mathbb{P}(N>n-d(n)) & = \mathbb{P}\left( M < d(n) \right) \\
   &= \mathbb{E}_Q\left[ \mathbb{P}(M<d(n) ) \mid Q\right] \\
   & = \int_0^1 \mathbb{P}\big(M<d(n)\mid Q=q\big)\ 2q\ dq.
\end{align*}
 For $0\le q\le q_0:=4d(n)/n$, we have $\mathbb{P}(M<d(n)\mid q)\le 1$, so
    \begin{align}\label{eq:9}
        \int_0^{q_0} \mathbb{P}(M<d(n)\mid q)\ 2q\  dq
    \le \int_0^{q_0} 2q\ dq
    = 16\left(\frac{d(n)}{n}\right)^2.
    \end{align}
    For $q_0 < q \leq 1$, the mean of $M$ is $nq > 4d(n)$, and as $M<d(n)$ is a lower tail event, a Chernoff-type bound gives 
    \begin{align*}
         \mathbb{P}(M<d(n)\mid q) & = \mathbb{P}\left(  M < \mathbb{E}(M)  -(nq - d(n))\right) \\
        & \leq \exp{\left( -\frac{(nq - d(n))^2}{2nq} \right)} \\
       & \leq \exp{\left( -\frac{9}{8}\cdot d(n) \right)},
    \end{align*}
    for all $q_0 < q \leq 1$. Thus,
    \begin{align}
        \int_{q_0}^1 \mathbb{P}(M<d(n)\mid q)\ 2q\  dq \le \exp\left(-\frac{9}{8}\cdot d(n)\right).
    \end{align}
Thus, we have 
$$
\mathbb{P}(N>n-d(n)) \leq 16\left(\frac{d(n)}{n}\right)^2 + \exp\left(-\frac{9}{8}\cdot d(n)\right).
$$
\end{proof}
We are now in a position to prove the second statement of theorem \ref{expectation}. \\
Set $d(n)=o(n)$. We want to estimate the probability that there exists any caged sequence of length $n - d(n)$. If this holds, there must exist a pair $i, j $ with $j - i  \geq n - d(n) - 1$ and $c[i, j] \geq n - d(n)$. Note that there are at most $(d(n))^2$ pairs with $j - i \geq n - d(n) - 1$. Pick any such pair $(i, j)$. In terms of earlier notation, we have the following.
\[
   c[i, j] = 2 \, +  \displaystyle \sum_{i < k < j} \displaystyle  \mathbb{1}_{X_i < X_k <X_j \text{ or } X_i > X_k > X_j}. 
\]
Write $N = c[i, j] - 2$. Clearly, $N \mid D \sim \text{Bin}(j - i - 1, D)$ where $D = |X_i - X_j|$. Now, $j - i - 1 \geq n - d(n) - 2 = (1-o(1))n$. Applying the lemma \ref{lemma 4.2}, 
\[
  \mathbb{P}\left( c[i, j] \geq n - d(n) \right) \leq  \exp{\left(-\frac{9}{8}\cdot (d(n) + 2) \right)} + 16\left(\frac{d(n) + 2}{(1+o(1))n}\right)^2.
\]
Using the union bound, 
\[
\mathbb{P}\left( \text{ There exists such a pair $i, j$ with } c[i, j] \geq n - d(n) \right) \leq \frac{d(n)^2}{\exp{\left(\frac{9}{8}\cdot (d(n) + 2) \right)}} + 16\frac{(d(n) + 2)^4}{(1+o(1))n^2}.
\]
For $d(n) = \frac{\sqrt{n}}{\omega(n)}$ with $\omega(n) = o(\sqrt{n})$ and $\omega(n) \rightarrow \infty $, the RHS in the above expression tends to $0$; this completes the proof.
\end{proof}

\begin{proof}[Proof of theorem \ref{sqaure}]
We shall describe the proof in the square shape case  ($\theta=1$) in full detail; the case for other $\theta$ is similar, and so we shall not go over all the relevant details there.\\

First, observe that to get a `super-long' caged sequence, there must occur some $i$ in a `small' proportion of initial indices that satisfies that $\pi(i)$ is either significantly small or significantly large. We will make this more precise momentarily.

Suppose $\pi \in \mathrm{ES}_n$ is picked  uniformly at random. Consider the set $[1, \delta N]$ for some fixed $0<\delta < 1/2$. By Theorem \ref{sandwich}, the plot of $\pi$ is given in terms of the tableaux $P$ and $Q$ (which are uniformly independent random $n \times n$ square tableaux) by 
\[
A_\pi = \left\{ (q_{i,j}, \, p_{n+1-i, j}) \, \middle\vert \, 1 \leq i, j \leq n \right\}.
\]
By Theorem \ref{limit_shape}, asymptotically, with high probability, each scaled point $\displaystyle \frac{1}{n^2}(q_{i,j}, \, p_{n+1-i, j})$ is uniformly close to the point $\left(L(x, y), \, L(1-x, y)\right)$, where $x = \frac{i}{n}, \, y = \frac{j}{n}$. We are interested in the points with $q_{i, j} \leq \delta N $, which when scaled, corresponds to those points for which $L(x, y) \leq \delta$ (See figure \ref{fig:delta}). In other words, we want to find the level curve ($L(x, y) = \alpha$) with the smallest possible $\alpha$, that non-trivially meets the region $L(1-x, y) \leq \delta$ (figure \ref{fig:delta 2}). \\
In the rotated $u-v$ system, the $\delta-$level curve has parameterization $\left(u, g_{\delta}(u)\right) $. Thus, in the $x-y$ system, it has the parametrization
\[
\left( \frac{u+g_\delta(u)}{\sqrt{2}}, \frac{g_\delta(u)-u}{\sqrt{2}} \right),
\]where $|u| \leq \sqrt{2\delta(1-\delta)}$. The corresponding reflected curve $L(1-x, y) = \delta$ is thus parametrized as 
\[
\left( 1-\frac{u+g_\delta(u)}{\sqrt{2}}, \frac{g_\delta(u)-u}{\sqrt{2}} \right).
\] We are interested in the point on this curve that lies on the smallest $\alpha-$level curve for some $\alpha>0$. Clearly, this point corresponds to the parameter $u = \sqrt{2\delta(1-\delta)}$ and therefore has coordinates $\left( 1- 2\sqrt{\delta(1-\delta)}, \,  0 \right)$ (see figure \ref{fig:delta 2}). As a point on the $\alpha-$level curve, its coordinates are $\left(  2\sqrt{\alpha(1-\alpha)}, \,  0 \right)$, which forces the relation 
\[
    2\sqrt{\alpha(1-\alpha)} + 2\sqrt{\delta(1-\delta)} = 1.
\] 
\begin{figure}
\centering
    \begin{subfigure}[b]{0.35\textwidth}            
            \includegraphics[width=\textwidth]{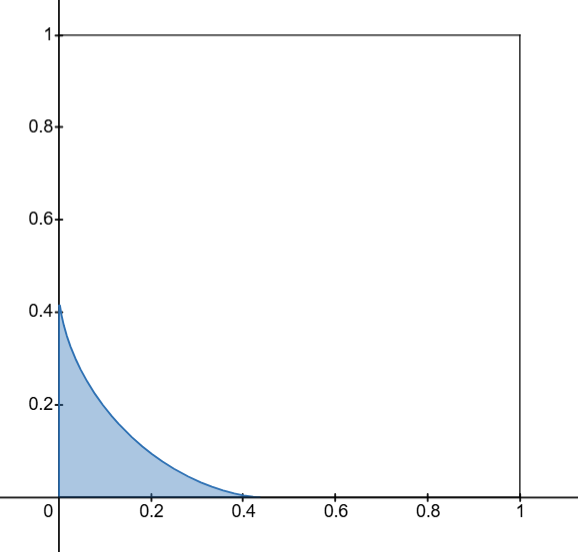}
            \caption{ $q_{i, j} \leq \delta N$}
            \label{fig:delta}
    \end{subfigure}%
    \hspace{7mm}
    \begin{subfigure}[b]{0.35\textwidth}
            \centering
            \includegraphics[width=\textwidth]{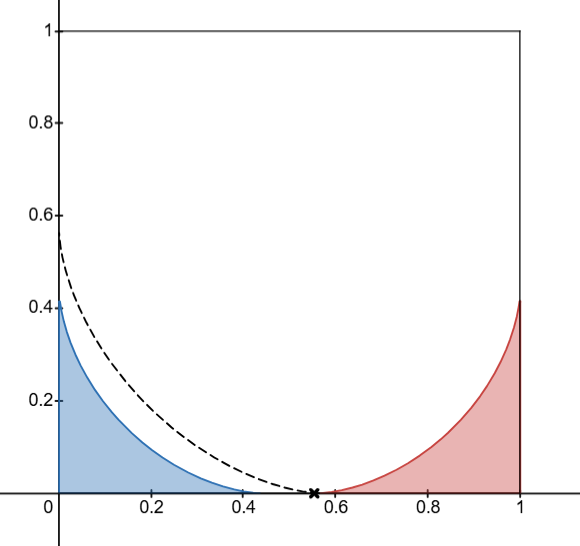}
            \caption{$\min p_{n + 1-i, j}  $ for $q_{i, j} \leq \delta N $}
            \label{fig:delta 2}
    \end{subfigure}
    \caption{Minimum value for $\delta-$proportion of initial indices}\label{fig:shaded}
\end{figure}
A simple calculation will imply that $\alpha \rightarrow 1/2$ as $\delta \rightarrow 0$. Similarly, one can argue about the $\alpha-$level curve for maximum $\alpha$ and get the same result. 
This shows that for sufficiently small $\delta$, the initial $\delta N$ indices do not assume significantly ``small" or ``large" values with probability tending to $1$ (as $n \rightarrow \infty$), and therefore the first part of theorem \ref{sqaure} holds. \\


Before we launch into the proof of the second part, observe that for any permutation $\pi\in S_n$ and for $i<j$, $c[i,j]$ simply counts the number of points contained in the axis-parallel rectangle with $(i,\pi(i))$ and $(j,\pi(j))$ as opposite vertices.

Let $\mathcal{Z}, L(s,t)$ be as in the statement of theorem \ref{shape_square} and let $\varphi:[0,1] \times[0,1] \rightarrow \mathcal{Z}$ be the 1-1 and onto mapping defined by
$$
\varphi(s, t)=\left(2 L(s, t)-1,2 L(1-s, t)-1\right).
$$

By the previous observations and by the content of theorem \ref{shape_square}, it suffices to find an axis-parallel rectangle contained in the region $\mathcal{Z}$ of maximum measure w.r.t. the density induced by the pull-forward map $\phi$. By the second statement of theorem \ref{shape_square}, it follows that we may assume that the vertices of such a rectangle $R$ lie on the boundary of $\mathcal{Z}$ (figure \ref{fig:rectangle}).
\begin{figure}
\centering
    \begin{subfigure}[b]{0.35\textwidth}            
            \includegraphics[width=\textwidth]{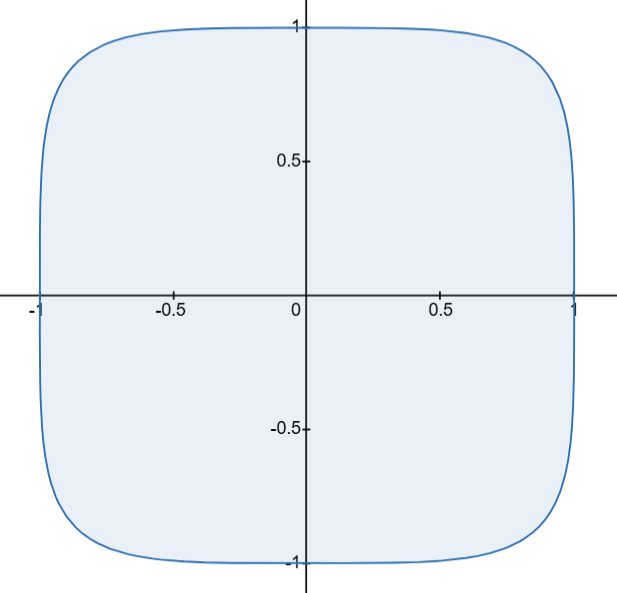}
            \caption{ The limiting shape $\mathcal{Z}$}
            \label{fig:Z}
    \end{subfigure}%
    \hspace{7mm}
    \begin{subfigure}[b]{0.35\textwidth}
            \centering
            \includegraphics[width=\textwidth]{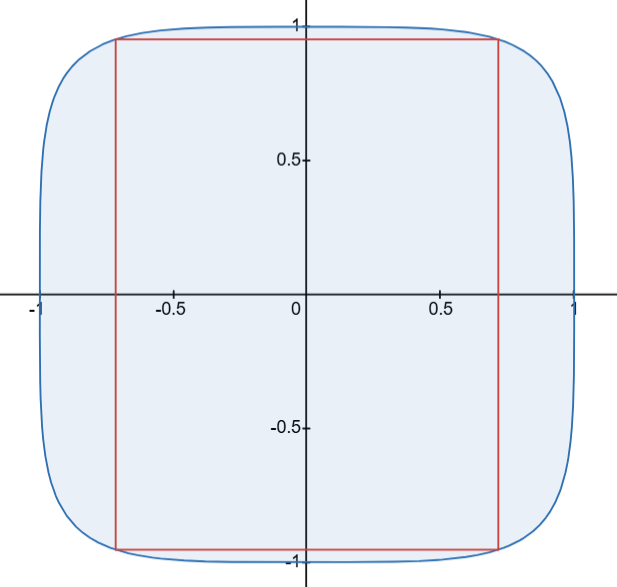}
            \caption{Axis-parallel rectangle $R$ in $\mathcal{Z}$}
            \label{fig:rectangle}
    \end{subfigure}
    \caption{Limiting shape $\mathcal{Z}$ of a random $ES$ permutation (Rectangle corresponds to some maximal caged sequence).}
\end{figure}
So, it is enough to find an axis-parallel rectangle $R$ that maximizes the integral $\int_R| J_{\varphi^{-1}}(x, y)|d x d y$, where $J_{\varphi^{-1}}$ is the Jacobian of the mapping $\varphi^{-1}$. \\
To compute this, it is simpler to pull back the image of such a rectangle $R$ and compute the area of $\varphi^{-1}(R)$ (see figure \ref{figure 7}). Moreover, since the rectangle is axis-parallel and the boundary curve of $\mathcal{Z} $ is symmetric in $X, Y$, the vertices of $R$ must be of the form $r_1=(a, b)$, $r_2=(-a, b)$, $r_3 = (-a, -b)$ and $r_4=(a, -b)$ with
\begin{align}\label{eq:7}
    \left(a^2-b^2\right)^2+2\left(a^2+b^2\right) = 3. 
\end{align}
  We have explicit formulae for $L(x,y)$ on the boundary \cite{lim_shapes}:
\begin{align*}
   & L(t, 0) = L(0, t) = \frac{1-\sqrt{1-t^2}}{2}, \\
   & L(t, 1)= L(1, t) = \frac{1+\sqrt{2t - t^2}}{2}.
\end{align*}
Plugging them in $\varphi$ and following some simple calculations, we get $(t, 0) \xmapsto{\varphi} \left(-\sqrt{1-t^2}, -\sqrt{2t - t^2}\right) $. In other words, the side of the square $[0, 1] \times [0,1]$ along the positive $x-$axis gets mapped to the part of the boundary curve of $\mathcal{Z}$ that lies in the third quadrant (see figure \ref{figure 7}), and the other sides are mapped in an orientation-preserving manner. Thus, the vertex $r_1=(a, b)$ (assuming both $a$ and $b$ are positive) corresponds to a point of the form $(t, 1)$ in its pre-image. \\
Now, the rectangle $R$ in $\mathcal{Z}$ is defined by its sides $y = \pm b$ and $x = \pm a$. Via the inverse image of $\varphi$, these give the level curves $\displaystyle L(x, y) = \frac{1\pm a}{2}$ and $\displaystyle L(1-x, y) = \frac{1\pm b}{2}$ as their pre-images. 
\begin{figure}
\centering
    \begin{subfigure}[b]{0.38\textwidth}            
            \includegraphics[width=\textwidth]{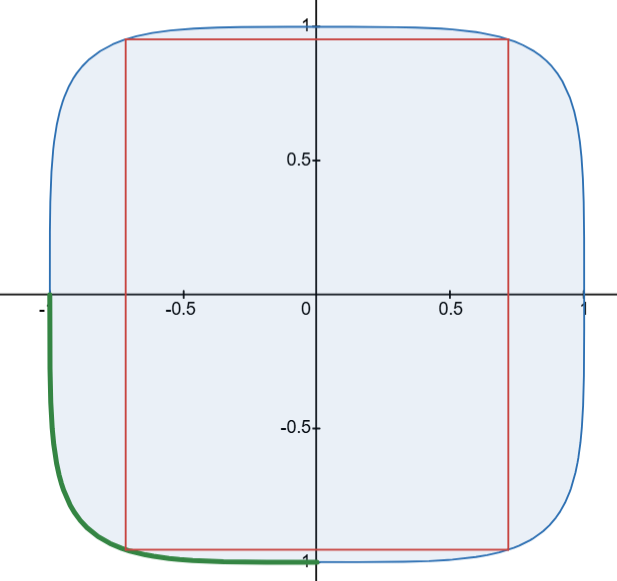}
            \caption{ The rectangle $R$ in $\mathcal{Z}$}
            \label{fig:rect_R}
    \end{subfigure}%
    \hspace{7mm}
    \begin{subfigure}[b]{0.35\textwidth}
            \centering
            \includegraphics[width=\textwidth]{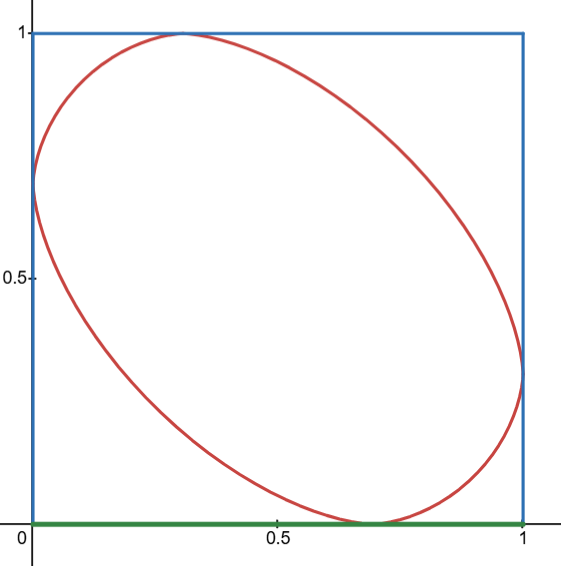}
            \caption{Pre-image of $R$ in $[0, 1] \times [0, 1]$}
            \label{fig:pre_R}
    \end{subfigure}
    \caption{Visual depiction of $\varphi^{-1}$ over an axis-parallel rectangle (the marked(green) part is mapping of one part of the boundary, all other parts are mapped similarly preserving the orientation).}\label{figure 7}
\end{figure}
Recall the rotated $u-v$ plane. The $\alpha-$level curves are given by $\left(u, g_\alpha(u)\right)$ for $|u| \leq \sqrt{2\alpha(1-\alpha)}$, where $g_\alpha(u)$ is defined as in \ref{eq: 6} for various values of $\alpha$. Define the function
\[
G(a, u) : = \frac{2u}{\pi}\tan^{-1}\left( \frac{\sqrt{2}au}{\sqrt{1-a^2-2u^2}}\right) + \frac{\sqrt{2}}{\pi}\tan^{-1}\left( \frac{\sqrt{1-a^2-2u^2}}{a} \right).
\]Note that this is the function $g_\alpha(u)$ evaluated at $\alpha = \frac{1-a}{2}$ for $0<a<1$. Therefore, in the rotated system, the area below this curve, which is within the rotated square (refer to Figure \ref{fig:rot_lvl}), is given by
\[
I(a) : = \int_{-A(a)}^{A(a)} G(a, u)du \, - \, \frac{1-a^2}{2}.
\] where $\displaystyle A(a) = \sqrt{\frac{1-a^2}{2}} $. Observe that this area is part of the excluded region. Similarly, the area in the top right corner (figure \ref{fig:pre_R}), above the curve $L(x, y) = \frac{1+a}{2}$, is also given by $I(a)$ due to the following reflection property \cite{lim_shapes}:
\[
L(x, y) = 1-L(1-x, 1-y).
\]
In addition, the excluded areas corresponding to the parameter $b$, the remaining two corners, are also given by $I(b)$ (together, $2I(b)$) as can be argued by reflecting back the curves $L(1-x, y) = \frac{1\pm b}{2}$ along the line $x = 1/2$. Therefore, the measure of rectangle $R$, that is, the area bounded by the curves, is 
\[
1- 2\left( I(a) + I(b)\right), 
\] where $a$ and $b$ are related by the equation \ref{eq:7}. Since we want to maximize this expression, we must minimize $I(a) + I(b)$. Equation \ref{eq:7} can also be described as
\begin{align}\label{eq:8}
    \sqrt{1-a^2} \, + \, \sqrt{1-b^2} = 1.
\end{align}
   To minimize $I(a) + I(b)$, routine calculus gives 
\begin{align*}
    \frac{\mathrm{d}I(a)}{\mathrm{d}a} & = \frac{\mathrm{d}}{\mathrm{d}a} \left(\int_{-A(a)}^{A(a)} G(a, u)du\right) \, - \, \frac{\mathrm{d}}{\mathrm{d}a}\left(\frac{1-a^2}{2}\right)  \\
    & = G(a, A(a))\frac{\mathrm{d}A(a)}{\mathrm{d}a} - G(a, -A(a))\left(-\frac{\mathrm{d}A(a)}{\mathrm{d}a}\right) + \int_{-A(a)}^{A(a)}\frac{\partial}{\partial a}G(a, u) du \, + a
\end{align*}
Now, $G(a, A(a)) = G(a, -A(a)) = A(a)$ and $\displaystyle A'(a) = \frac{-a}{2A(a)}$. Thus, 
\begin{align*}
    \frac{\mathrm{d}I(a)}{\mathrm{d}a} & = 2A(a)A'(a) +  \int_{-A(a)}^{A(a)}\frac{\partial}{\partial a}G(a, u) du \, + a 
      =  \int_{-A(a)}^{A(a)}\frac{\partial}{\partial a}G(a, u) du .
\end{align*} 
To compute $\displaystyle \frac{\partial}{\partial a}G(a, u)$, let
\[
G_1(a, u) =  \frac{2u}{\pi}\tan^{-1}\left( \frac{\sqrt{2}au}{\sqrt{1-a^2-2u^2}}\right) \text{ and } G_2(a, u) =\frac{\sqrt{2}}{\pi}\tan^{-1}\left( \frac{\sqrt{1-a^2-2u^2}}{a} \right).
\]
Routine calculations give 
\begin{align*}
   & \frac{\partial G_1(a, u)}{\partial a}  = \frac{2\sqrt{2}u^2}{\pi (1-a^2) \sqrt{1-a^2-2u^2}}, \quad  \frac{\partial G_2(a, u)}{\partial a}  = \frac{-\sqrt{2}}{\pi \sqrt{1-a^2-2u^2}} 
\end{align*}
Thus, we have
\begin{align*}
     \frac{\mathrm{d}I(a)}{\mathrm{d}a} & =  \int_{-A(a)}^{A(a)}\frac{\partial}{\partial a}G(a, u) du \\
      & = \int_{-A(a)}^{A(a)}\left( \frac{2\sqrt{2}u^2}{\pi (1-a^2) \sqrt{1-a^2-2u^2}} - \frac{\sqrt{2}}{\pi \sqrt{1-a^2-2u^2}}  \right)du \\
      & = \frac{-\sqrt{2}}{\pi (1-a^2)}\int_{-A(a)}^{A(a)} \sqrt{1-a^2-2u^2}du.
\end{align*}
Since it is an even function, 
\[
 \frac{\mathrm{d}I(a)}{\mathrm{d}a} =  \frac{-2\sqrt{2}}{\pi (1-a^2)}\int_{0}^{A(a)} \sqrt{1-a^2-2u^2}du = \frac{-1}{2}.
\]
Therefore, 
\[
\frac{\mathrm{d}}{\mathrm{d}a}\left( I(a) + I(b) \right) = \frac{-1}{2}\left( 1+ \frac{\mathrm{d}b}{\mathrm{d}a}\right).
\]
Equation \ref{eq:8} yields $\displaystyle a=b = \frac{\sqrt{3}}{2}$ as the point of global minimum. Thus,
\[
\beta=
\frac{3}{2} - 4\int_{\frac{-1}{2\sqrt{2}}}^{\frac{1}{2\sqrt{2}}}\left[\frac{2u}{\pi}\tan^{-1}\left( \frac{\sqrt{6}u}{\sqrt{1-8u^2}} \right) + \frac{\sqrt{2}}{\pi}\tan^{-1}\left( \frac{\sqrt{1-8u^2}}{\sqrt{3}} \right) \right]du.
\]
Numerical estimates for the above integral yield 
\[
\beta \approx 0.732.
\]

To prove the first part of the theorem for general rectangular permutations of shape $m\times n$ with $m=\theta n$, we use \ref{Gen_tab} and another result similar to theorem \ref{limit_shape} (see \cite{lim_shapes}) about the shape of a random $\pi\in\mathrm{ES}_{\lfloor\theta n\rfloor, n}$ over the domain $[0, 1] \times [0, \theta]$. 
Again, as in the square case, one can compute the level curves $L_\theta:[0,1] \times[0, \theta] \rightarrow[0,1]$, that describe the limiting surface of an $m\times n$ rectangular tableau with side ratio $\theta$. The calculations are more cumbersome, so we omit the details. 
\end{proof}
\section{Concluding remarks and some open questions}
\begin{itemize}
\item The set $\mathcal{E}_{ext}$ of \emph{all} the extremal permutations that minimize $c(\pi)$ is not yet completely known. It is of course clear that $\mathcal{E}_{ext}$ is closed under the operations $\pi\to\pi^R,\pi\to\pi^{-1}$ and $\pi\to \overline{\pi}$ but whether there are permutations distinctly different from $\pi_{ext}$ is not clear. Even within this family, some of them are more optimal than others. For instance, recall the permutation $\pi_{ext}$. For $n = 5k + 4$, its middle interval has length $k$ and all other intervals have length $k + 1$. Since this middle interval entirely contributes to $c[\bar{\mathbf{1}}, \bar{\mathbf{5}}]$ as well as $c[\bar{\mathbf{2}}, \bar{\mathbf{4}}]$, the number of maximal caged sequences is $O(k^2)$. If $n \not\equiv -1 \mod 5$, the length of the middle interval can be reduced, which brings the number of optimal caged sequences down to $O(k)$. However, whether such examples exist for $n = 5k + 4$ is not yet clear. 
\item A simple corollary of the ideas in the proof of theorem \ref{caged} is the following: Given $\lfloor n/5\rfloor+2\le\ell\le n$ there exists $\pi\in S_n$ such that $c(\pi)=\ell$. To see why, partition the interval $[1, n]$ in $5$ parts $n_1$ to $n_5$ (some of them can be empty). Recall the ``interval" definition of $\pi_{ext} $. Let $n_i \uparrow$ (respectively, $n_i \downarrow$) indicates interval $n_i$ in increasing (respectively, decreasing) order. Define the permutation $$\pi = n_4 \uparrow n_1\downarrow n_3 \uparrow n_5 \downarrow n_2 \uparrow $$ Observe that its structure is similar to $\pi_{ext} $ except the lengths of the intervals are variable. So, we have $\max \text{FCS}(\pi) = \max\left\{ |n_3| + \mathbb{1}_{n_1 \neq \emptyset} + \mathbb{1}_{n_5 \neq \emptyset} , \, |n_4| + \mathbb{1}_{n_5 \neq \emptyset}, \, |n_2| + \mathbb{1}_{n_1 \neq \emptyset} \right\}$ while $\max \text{BCS}(\pi) = \max\left\{ |n_3| + \mathbb{1}_{n_4 \neq \emptyset} + \mathbb{1}_{n_2 \neq \emptyset}, \, |n_1| + \mathbb{1}_{n_4 \neq \emptyset}, \, |n_5| + \mathbb{1}_{n_2 \neq \emptyset}  \right\}$. Now
\[
c(\pi) = \max \left\{ \max \text{FCS}(\pi) , \, \max \text{BCS}(\pi) \right\}, 
\] with $\sum_{i = 1}^5 |n_i| = n $. Clearly, this system is feasible given $c(\pi) = \ell$ for any $\lfloor n/5\rfloor + 2 \leq \ell \leq n$. 
\item A closer inspection of the proofs of theorems \ref{caged} and \ref{asymmetric} shows that it suffices to consider the points $1,\pi^{-1}(1),\pi^{-1}(n), n$ and at least one pair among these will attain the lower bound in the theorem. It is therefore a natural question to ask the following: For a given $\pi$, if $[i_{\pi},j_{\pi}]$ is an interval satisfying $c[i_{\pi},j_{\pi}]=c(\pi)$, then are $i_{\pi}$ or $j_{\pi}$ in a close neighborhood of any one of those four points? It turns out that such is not the case, by the following simple argument. Let $n=4k^2+4$. Pick $\sigma$ uniformly randomly from $\mathrm{ES}_{2k}$ on the set $[4k^2+2] \setminus \{2k^2 +1, 2k^2 + 2 \}$ and define $\pi\in S_n$ by setting $\pi(1)=2k^2+1,\pi(n)=2k^2+2, \pi(2k^2+1)=1, \pi(2k^2+2)=n$ and $\pi(i)=\sigma(i-1)+1$ for $2\leq i< 2k^2 + 1$, and $\pi(i) = \sigma(i-3) + 1$ for the remaining $i$. It is easy to see that $c(\pi)$ will be more or less determined by the maximal interval for $\sigma$. Moreover, it follows from the proof of theorem \ref{sqaure} that the ends of any maximal interval for $\sigma$ occur far away from any of the four distinguished points for $\pi$.
\item We have already seen that restricting the number of runs in $\pi$ does not necessarily alter the statement of theorem \ref{caged} in any significant manner. One can similarly impose algebraic restrictions on $\pi$. If we insist that $\pi$ comes from a much smaller subgroup of $S_n$ does this change the answer much? Again, note that in fact, $\pi_{ext}$ is in fact in the hyperoctahedral group $B_n$ which is a subgroup of $S_n$ (for even $n$) of size $2^{n/2}(n/2)!$ which is a much smaller group than $S_n$, so merely restricting the size of the subgroup does not immediately give us a bigger answer even if the restricted group is exponentially smaller than $S_n$. Another restriction one may impose is to have $\pi$ admit a large cycle in its cycle decomposition. While $\pi_{ext}$ itself is a permutation of order four, a slight tinkering with $\pi_{ext}$ allows us to get a permutation $\pi\in S_n$ with $c(\pi)\sim n/5$ and with $\pi$ admitting a $(4n/5)$-sized cycle in its cycle decomposition. But the problem of determining $c(\pi)$ when we restrict ourselves to the subfamily of $n$-cycles is still open; it is not clear whether this algebraic imposition changes the answer to something significantly larger than $n/5$.
\item Given a pair of permutations $\pi_1,\pi_2\in S_n$, it is a simple consequence of the Erd\H{o}s-Szekeres theorem that there is a set $S$ of size at least $n^{1/2}$ which appear in both, $\pi_1$ and $\pi_2$, in the same relative order, and hence, by theorem \ref{caged}, $\pi_1,\pi_2$ have a commonly caged subset of size at least $\Omega(\sqrt{n})$. In fact, one can get a common caged sequence to both $\pi,\sigma$ of order $\Omega(n)$ as follows. Let $T$ be the total number of triples $(a,x,b)$ such that $x$ is caged between $a,b$ in both $\sigma,\pi$. Define $C(a,b)$ to be the number of $x\in (a,b)$ that are caged by $a,b$ in both $\sigma,\pi$. We have $T=\sum_{a,b} C(a,b)$. This implies that for a random pair $a<b$, $\mathbb{E}C(a,b)=\frac{T}{\binom{n}{2}}$. But note that any subset of $[n]$ of size  $17$ contains a simultaneously caged triple by two iterative applications of the Erd\H{o}s-Szekeres theorem. If $H$ is the $3$-uniform hypergraph on the vertices $[n]$ with $\{a,x,b\}$ being an $\mathcal{H}$-edge if $\{a,x,b\}$ forms a simultaneously caged triple in $\sigma, \pi$. Then $H$ is a $(n,17, 3)$-Tur\'
{a}n system, so by De Caen's Tur\'{a}n-system inequality we have gives
    \begin{equation}
        T(n,s,r)\ge \frac{n-s+1}{n-r+1}\frac{\binom{n}{r}}{\binom{s-1}{r-1}}
    \end{equation}
 which, by setting $s=17$ and $r=3$ gives
    \begin{equation}
        \mathbb{E}C(a,b)\ge \frac{T}{\binom{n}{2}}\ge \frac{n-16}{360}.
    \end{equation}
    In fact, the same argument shows that for any $k\ge 2$ and any $\sigma_1,\ldots,\sigma_k\in S_n$, there is a common caged sequence to all the $\sigma_i$ of size at least $\frac{n}{3\cdot2^{2^{k+1}}}$. It now remains an interesting question to determine the correct order of a commonly caged set for any two distinct permutations $\pi,\sigma$.

The size of a largest possible, commonly caged subset of $\pi_1$ and $\pi_2$ - a question of natural interest - is likely to be much larger though that is not clear at the moment.
\item The algorithmic question of determining $c(\pi)$ for a given $\pi\in S_n$ is clearly feasible in runtime $O(n^3)$. As observed by Y. Kasugai \cite{Yuuki},  we can do it in $O(n^2)$ as follows. For each pair $i <j$, let $u = \min\{\pi(i), \pi(j)\}$ and $v = \max\{\pi(i), \pi(j)\}$. Let $P(a,b)$ denote the number of indices$k\le a$ such that $\pi(k)\le b$. It is not hard to see that the table $P$ can be precomputed in $O(n^2)$ time. Then, for each pair $i < j$,
$c[i,j] = P(j,v)-P(i-1,v)-P(j,u-1)+P(i-1,u-1)$, so each value $c[i,j]$ can be obtained in constant time. Since there are $O(n^2)$ pairs $(i,j)$, the maximum over all pairs, which gives $c(\pi)$ can be computed in time $O(n^2)$.\\
The question that lurks now is: Is it possible to determine $c(\pi)$ in time $o(n^2)$? By a previous remark, it follows that one can find a $(1/5)-OPT$ approximation to $c(\pi)$ in linear time. Can we get a better approximation in linear time?
\item For which shape $\Lambda$ is $\frac{\mathbb{E}_{\pi\sim\Lambda} c(\pi)}{n}$ the least? Here we are picking $\pi$ uniformly from the permutations of shape $\Lambda$.   
\item Experimental data suggests that for certain shapes $\Lambda$, if $\pi$ is picked randomly from the set of permutations of that shape,  $c(\pi)\sim n$ with high probability, while for other shapes, $c(\pi)\le (1-\alpha_{\Lambda}n)$. At the moment, we hazard a conjecture that if the shape $\Lambda$ is \emph{convex} then the latter phenomenon occurs, i.e., $c(\pi)$ is significantly short of $n$. A test case would be to see how $c(\pi)$ behaves for the triangular shapes, i.e., $\Lambda=(k,k-1,\ldots,1)$ for some integer $k$. 
\item Last, but not least, here is a game-theoretic version. Suppose $k,\ell\in\mathbb{N}$ and suppose $n\ge\min\{2k+3\ell,3k+2\ell\}-10$. The players, Alice and Bob, alternately pick elements from $[n]$ to create a permutation $\pi$. If at any stage the partial permutation $\pi$ has a forward caged sequence of length at least $k$, then Alice wins, and Bob wins if at some stage there is a backward caged sequence of length at least $\ell$. Which player has a winning strategy?
\end{itemize}
\bibliographystyle{amsplain}

\end{document}